\newcommand{\Crm}{\mathrm{C}}
\newcommand{\Mrm}{\mathrm{M}}
\newcommand{\Nrm}{\mathrm{N}}
\newcommand{\Ccal}{\mathcal{C}}
\newcommand{\Lcal}{\mathcal{L}}
\newcommand{\Scal}{\mathcal{S}}
\newcommand{\Mbf}{\mathbf{M}}
\newcommand{\Lscr}{\mathscr{L}}
\newcommand{\altnorm}[1]{{\left\vert\kern-0.25ex\left\vert\kern-0.25ex\left\vert #1 \right\vert\kern-0.25ex\right\vert\kern-0.25ex\right\vert}}
\newcommand{\eps}{\varepsilon}
\newcommand{\dprb}[1]{\bigl\langle #1 \bigr\rangle}
\newcommand{\tbf}{\mathbf{t}}
\newcommand{\pbf}{\mathbf{p}}
\theoremstyle{definition} \newtheorem{definition}{Definition}[section]
\theoremstyle{definition} \newtheorem{remark}[definition]{Remark}
\theoremstyle{plain} \newtheorem{lemma}[definition]{Lemma}
\theoremstyle{plain} \newtheorem{proposition}[definition]{Proposition}
\theoremstyle{plain} \newtheorem{theorem}[definition]{Theorem}
\newtheorem*{theorem*}{Theorem}
\newtheorem*{proposition*}{Proposition}
\theoremstyle{plain} 
\theoremstyle{definition} 
\theoremstyle{plain} 
\theoremstyle{definition} 
\theoremstyle{plain} 
\theoremstyle{plain}
\DeclareMathOperator{\dive}{div}
\DeclareMathOperator{\dist}{dist}
\DeclareMathOperator{\Lip}{Lip}
\DeclareMathOperator{\Wedge}{{\textstyle\bigwedge}}
\DeclareMathOperator*{\esssup}{esssup}
\newcommand{\ee}{\mathrm{e}}
\newcommand{\sbullet}{\begin{picture}(1,1)(-0.5,-2.5)\circle*{2}\end{picture}}
\newcommand{\frarg}{\,\sbullet\,}
\newcommand{\R}{\mathbb{R}}
\newcommand{\N}{\mathbb{N}}
\newcommand{\e}{\varepsilon}
\renewcommand{\L}{\mathscr L}
\newcommand{\dd}{\mathrm{d}}
\newcounter{counter}
\newcommand{\Dscr}{\mathscr{D}}
\renewcommand{\L}{\mathscr L}
\theoremstyle{plain} 
\theoremstyle{plain} \newtheorem*{mthm*}{Main Theorem}
\theoremstyle{plain} \newtheorem*{conjecture*}{Conjecture}
\theoremstyle{plain} 
\theoremstyle{plain} \newtheorem*{problem*}{Problem}
\newcommand{\bbb}[1]{\llbracket #1 \rrbracket}
\numberwithin{equation}{section}
\definecolor{shadecolor}{rgb}{0.94, 0.97, 1.0}
\DeclareRobustCommand{\intprod}{%
  \mathbin{\mathpalette\int@prod{(0.1,0)(0.9,0)(0.9,0.8)}}}
\DeclareRobustCommand{\restrict}{%
  \mathbin{\mathpalette\int@prod{(0.1,0.8)(0.1,0)(0.9,0)}}}	
\newcommand{\int@prod}[2]{%
  \begingroup
  \sbox\z@{$\m@th#1+$}%
  \setlength\unitlength{\wd\z@}%
  \begin{picture}(1,1)
  \roundcap
  \polyline#2
  \end{picture}%
  \endgroup
}
\author{Paolo Bonicatto}
\address[P.\ Bonicatto]{Università di Trento,
Dipartimento di Matematica,
Via Sommarive 14, 38123 Trento, Italy}
\email{paolo.bonicatto@unitn.it}
\author{Giacomo Del Nin}
\address[G.\ Del Nin]{Max Planck Institute for Mathematics in the Sciences, Inselstrasse 22, 04103 Leipzig, Germany}
\email{giacomo.delnin@mis.mpg.de}
\title[Uniqueness of time-dependent transport of currents]{Well-posedness of the transport of normal currents\\ by time-dependent vector fields}
\date{\today}
\begin{document}

\begin{abstract}
    We prove existence and uniqueness for the transport equation for currents (Geometric Transport Equation) when the driving vector field is time-dependent, Lipschitz in space and merely integrable in time. This extends previous work where well-posedness was shown in the case of a time-independent, Lipschitz vector field. The proof relies on the decomposability bundle and requires to extend some of its properties to the class of functions that in one direction are only absolutely continuous, rather than Lipschitz.
    \vskip.1truecm
    \noindent \textsc{\footnotesize Keywords}: currents, continuity equation, Lipschitz functions, Lie derivative, decomposability bundle, uniqueness, absolutely continuous functions 
		\vskip.1truecm
		\noindent \textsc{\footnotesize 2020 Mathematics Subject Classification}: 49Q15, 35Q49.
	\end{abstract}

\maketitle

\section{Introduction}
Let $b:[0,1]\times\R^d\to \R^d$ be a Borel vector field. For every $t \in [0,1]$ we write $b_t(x):=b(t,x)$.
In \cite{BDR} the authors considered the Geometric Transport Equation 
\begin{equation}\label{eq:GTE}
    \frac{\dd}{\dd t} T_t+\Lcal_{b_t}T_t=0\tag{GTE}
\end{equation}
and studied several of its properties.
This equation, which we understand in the distributional sense (see \eqref{eq:def_weak_sense}), describes the motion of $k$-currents $T_t$ driven by the given vector field $b$, for any $k\in \{0,\ldots,d\}$. We refer the reader to \cite{BDR} for some motivating examples and for the connection of \eqref{eq:GTE} with the transport and continuity equations and several other PDEs from mathematical physics. 

A natural question is to understand under which regularity assumptions on $b$ and/or on the currents $T_t$ the initial value problem associated with \eqref{eq:GTE} is well posed, i.e., when for every $\bar T$ there exists a unique solution starting from $\bar T$ at time $0$. In \cite{BDR-Lipschitz} it was proven that well-posedness holds in the class of normal $k$-currents $\Nrm_k(\R^d)$ as soon as the vector field $b$ is Lipschitz and autonomous, but extending this to time-dependent vector fields was left open. 

In order to understand which regularity is natural to require on $b$, we recall that \eqref{eq:GTE} corresponds, in the case of $0$-currents, to the continuity equation
\[
\frac{\dd}{\dd t}\mu_t +\dive(b_t\mu_t)=0
\]
for a family of (possibly signed) measures $\mu_t$.
For this equation several well-posedness results are already established (see, for instance, \cite[Chapter 8]{AGS}), and a natural class in this case is given by $L^1_t \Lip_x$, namely those vector fields that satisfy 
\begin{equation}\label{eq:Lip_assumption}\tag{L}
\int_0^1 \big(\|b_t\|_{C^0(\R^d)}+\Lip(b_t;\R^d)\big)\,\dd t<+\infty,
\end{equation}
where $\Lip(b_t;\R^d)$ denotes the Lipschitz constant of $b_t$.  
In the present work we extend the well-posedness result of \cite{BDR-Lipschitz} to the class of all vector fields satisfying \eqref{eq:Lip_assumption}. We summarize this in the following statement (see Theorem \ref{thm:existence} and Theorem \ref{thm:uniqueness}).

\begin{theorem*}
    Let $b:[0,1]\times\R^d\to\R^d$ be a vector field satisfying \eqref{eq:Lip_assumption}.
    Moreover, let $\bar T\in \Nrm_k(\R^d)$. Then there exists a unique family of currents $T_t\in L^\infty([0,1]; \Nrm_k(\R^d))$ that solves
\[
\begin{cases}
    \frac{\dd}{\dd t} T_t+\Lcal_{b_t}T_t=0\\
    T_0=\bar T.
\end{cases}
\]
More precisely, denoting by $(\Phi^0_t)_t$ the flow of $b$ starting from time $0$, the solution is given by the pushforward $T_t=(\Phi^0_t)_* \bar T$ for every $t \in [0,1]$.
\end{theorem*}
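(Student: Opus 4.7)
The plan is to split into existence and uniqueness, both built around the Cauchy--Lipschitz flow $\Phi^0_t$ of $b$ starting at time $0$. Assumption \eqref{eq:Lip_assumption} allows classical ODE theory (Carathéodory + Gronwall) to produce $\Phi^0_t$ as a bi-Lipschitz map for every $t\in[0,1]$, with $\Lip(\Phi^0_t)\le \exp\bigl(\int_0^t \Lip(b_s;\R^d)\,\dd s\bigr)$, and with $t\mapsto \Phi^0_t(x)$ absolutely continuous for each $x$. I will denote by $\Phi^t_0:=(\Phi^0_t)^{-1}$ the inverse flow.

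\textbf{Existence.} I would simply set $T_t:=(\Phi^0_t)_*\bar T$. Since pushforward by bi-Lipschitz maps preserves normal currents, with mass and boundary mass controlled by powers of the Lipschitz constant, this puts $T_t\in L^\infty([0,1];\Nrm_k(\R^d))$. To verify \eqref{eq:GTE} in the distributional sense I would test against a smooth compactly supported form $\omega$ and compute
\[
\frac{\dd}{\dd t}\langle T_t,\omega\rangle=\frac{\dd}{\dd t}\langle \bar T,(\Phi^0_t)^*\omega\rangle=\langle \bar T,(\Phi^0_t)^*(\Lcal_{b_t}\omega)\rangle=\langle T_t,\Lcal_{b_t}\omega\rangle,
\]
using Cartan's magic formula together with the absolute continuity in $t$ of the flow to differentiate the pullback of $\omega$ pathwise in $x$.

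\textbf{Uniqueness.} By linearity of \eqref{eq:GTE}, it suffices to prove that any $T_t\in L^\infty([0,1];\Nrm_k(\R^d))$ solving \eqref{eq:GTE} with $T_0=0$ vanishes identically. My plan is to introduce the pulled-back family $S_t:=(\Phi^t_0)_* T_t$ and to show that $S_t$ is constant in $t$; since $S_0=T_0=0$ and $\Phi^t_0$ is a bi-Lipschitz homeomorphism, this forces $T_t=0$ for a.e.\ $t$. Formally, differentiating $\langle S_t,\eta\rangle=\langle T_t,(\Phi^t_0)^*\eta\rangle$ in $t$ against a smooth form $\eta$ produces two contributions that cancel: the one from differentiating $T_t$ via \eqref{eq:GTE} yields $-\langle T_t,\Lcal_{b_t}((\Phi^t_0)^*\eta)\rangle$, while the one from differentiating the backward pullback yields $+\langle T_t,\Lcal_{b_t}((\Phi^t_0)^*\eta)\rangle$ by Cartan's formula.

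\textbf{Main obstacle.} The formal cancellation in the uniqueness step is the crux. It is immediate for smooth $b$, but in our setting $(\Phi^t_0)^*\eta$ is only Lipschitz in $x$ and absolutely continuous in $t$, and its Lie derivative along $b_t$ must be interpreted in an a.e.\ sense that remains compatible with pairing against the normal current $T_t$. As in \cite{BDR-Lipschitz}, the right tool for the spatial Lipschitz direction is Alberti's decomposability bundle, which controls the admissible differentiation of $(\Phi^t_0)^*\eta$ \virgolette{in the directions seen by $T_t$}. The genuinely new technical content of the paper, as announced in the abstract, is to extend the properties of the decomposability bundle to functions that are merely absolutely continuous in one coordinate (here, the time variable), so as to legitimate the time-derivative manipulations in the cancellation above. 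Once this extension is available, the invariance of $S_t$ follows and uniqueness is obtained.
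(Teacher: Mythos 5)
Your proposal identifies the correct solution formula $T_t=(\Phi^0_t)_*\bar T$ and correctly flags the decomposability bundle extended to $AC_t\Lip_x$ maps as the necessary tool, but the route you sketch is substantively different from the paper's, and the part you label as the \virgolette{crux} is exactly the step your plan does not provide a mechanism for.

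You propose to argue \emph{pathwise in time}: differentiate $\langle \bar T,(\Phi^0_t)^*\omega\rangle$ in $t$ via Cartan's formula, and for uniqueness differentiate $\langle T_t,(\Phi^t_0)^*\eta\rangle$ and observe a formal cancellation. Executing this requires three things that are not automatic and are not covered by Theorem~\ref{thm:differentiability} or Lemma~\ref{lemma:diff_flow}. First, a \emph{time-evolution formula for the spatial Jacobian of the flow restricted to the bundle}: the identity $\tfrac{\dd}{\dd t}\langle\bar T,(\Phi^0_t)^*\omega\rangle=\langle\bar T,(\Phi^0_t)^*(\Lcal_{b_t}\omega)\rangle$ involves $\tfrac{\dd}{\dd t}\,d_{\bar T}\Phi^0_t$, i.e.\ the linearized ODE $\tfrac{\dd}{\dd t}D\Phi^0_t=Db_t(\Phi^0_t)D\Phi^0_t$ along $V(\|\bar T\|,\cdot)$. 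This does not follow from knowing that $\Phi^0_t$ is differentiable along the bundle: you would also need $b_t$ to be differentiable along the moving bundle $V((\Phi^0_t)_\#\|\bar T\|,\cdot)$ at the point $\Phi^0_t(x)$ for a.e.\ $(t,x)$, with a compatible chain rule and a dominated-convergence justification of the interchange. Nothing in Section~\ref{sec:bundle} produces this, and the paper never proves it. Second, $\Lcal_{b_t}\omega$ for Lipschitz $b_t$ is only a bounded, $\L^d$-a.e.\ defined form, so even $(\Phi^0_t)^*(\Lcal_{b_t}\omega)$ and its pairing with $\bar T$ need a separate justification. Third, your uniqueness step tests the distributional identity $\tfrac{\dd}{\dd t}T_t=-\Lcal_{b_t}T_t$ against the Lipschitz, time-dependent form $(\Phi^t_0)^*\eta$, but the weak formulation \eqref{eq:def_weak_sense} only allows smooth compactly supported $\omega$ times $\psi\in\Crm_c^\infty((0,1))$; extending the test class requires a nontrivial density/approximation argument of its own.

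The paper avoids all of this by lifting to space-time. For existence it pushes forward the $(k+1)$-current $C=\bbb{0,1}\times\bar T$ by the $AC_t\Lip_x$ map $\Psi(t,x)=(t,\Phi^0_t(x))$ using Proposition~\ref{prop:pushforward}, and then reads off the weak formulation by testing $\partial Z$ against product forms $\tbf^*\alpha\wedge\pbf^*\beta$; for uniqueness it builds the space-time current $U$ with orientation $(1,b_t)\wedge\vec T_t$, uses \cite[Lemma 4.1]{BDR-Lipschitz} to show $\partial U\restrict(0,1)\times\R^d=0$ (this is where the PDE enters, with no pathwise time-differentiation at all), and then pushes forward by $\Psi^{-1}$. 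The only differentiability-in-time input needed is Lemma~\ref{lemma:diff_flow}, namely the derivative of $\Psi$ and $\Psi^{-1}$ in the single direction $(1,\cdot)$ — a much weaker statement than the Jacobian ODE your plan implicitly requires. So while your plan and the paper's proof share the conclusion, the flow, and the use of the decomposability bundle, the actual mechanisms are different, and the step your plan marks as the crux is a genuine gap: the extension of the bundle to $AC_t\Lip_x$ (Theorem~\ref{thm:differentiability}) by itself does not legitimize the Cartan-style time-differentiation, and the space-time formulation is what makes the argument close.
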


As in \cite{BDR-Lipschitz}, the strategy to prove this result heavily relies on the decomposability bundle $V(\mu,\frarg)$ associated with a measure $\mu$ in $\R^d$, see \cite{AM}. A central role in the proof is played by the space-time flow $\Psi(t,x)=(t,\Phi^0(t,x))$ associated with $b$ (see Subsection \ref{subsec:flows}).
Using the decomposability bundle we are able to deduce some pointwise formulae for the currents $T_t$ involving the derivatives of $\Psi$.  
However, the difficulty of directly using the same approach as in \cite{BDR-Lipschitz} lies in the fact that the flow associated with a vector field satisfying \eqref{eq:Lip_assumption} is still Lipschitz in space but only absolutely continuous in time (we denote this space of functions by $AC_t\Lip_x$, see Definition \ref{def:AC_t_Lip_x}), while the original formulation of the decomposability bundle only allows for Lipschitz functions. For this reason, in the first part of the paper we will focus on extending some differentiability results to the space $AC_t\Lip_x$. 
We mention here one of the results as an example, see Theorem \ref{thm:differentiability}.
\begin{theorem*}
    Let $f\in AC_t\Lip_x(\R\times\R^d)$, and let $\mu$ be a measure in $\R\times\R^d$ that can be written as $\mu=\L^1(\dd t ) \otimes \mu_t$ for some family of measures $\mu_t$. Then $f$ is differentiable along $V(\mu,(t,x))$ for $\mu$-a.e. $(t,x)$.
\end{theorem*}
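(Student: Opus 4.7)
Classical Alberti--Marchese \cite{AM} applies only to globally Lipschitz functions, whereas our $f\in AC_t\Lip_x$ lacks global Lipschitzness in $t$. The strategy is to harvest differentiability from each assumption separately — spatial Lipschitzness for the $\R^d$-directions and temporal absolute continuity for the $\R$-direction — and then glue them together using the product structure of $\mu$. A preliminary structural observation is that the Lebesgue factor in $\mu=\L^1(\dd t)\otimes\mu_t$ forces the compatibility
\[
V(\mu,(t,x))\subseteq \R\,e_t \oplus V(\mu_t,x)
\]
for $\mu$-a.e.\ $(t,x)$: any rectifiable decomposition of $\mu$ must project to a rectifiable decomposition of $\mu_t$ in the slice $\{t\}\times\R^d$. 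Accordingly, every $v\in V(\mu,(t,x))$ splits as $v=(v_0,v')$ with $v_0\in\R$ and $v'\in V(\mu_t,x)$.

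\textbf{Building the candidate derivative.} For $\mu$-a.e.\ $(t,x)$ we dispose of two partial derivatives: $\partial_t f(t,x)$, which exists because $s\mapsto f(s,x)$ is absolutely continuous for every $x$ and $\mu$ has an $\L^1$-marginal in $t$; and the classical Alberti--Marchese spatial derivative $\nabla_x^V f(t,x)$ of the slice $f(t,\cdot)$ along $V(\mu_t,\cdot)$, well defined for $\L^1$-a.e.\ $t$ and $\mu_t$-a.e.\ $x$ since $f(t,\cdot)$ is Lipschitz. Take as candidate
\[
L(t,x)[(v_0,v')]:=v_0\,\partial_t f(t,x)+\dpr{\nabla_x^V f(t,x),v'}.
\]

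\textbf{Verifying differentiability.} Write $f(t+hv_0,x+hv')-f(t,x)=A_h+B_h$ where $A_h=f(t,x+hv')-f(t,x)$ and $B_h=f(t+hv_0,x+hv')-f(t,x+hv')$. Then $A_h=h\,\dpr{\nabla_x^V f(t,x),v'}+o(h)$ by the slicewise Alberti--Marchese applied at time $t$, while $B_h=\int_t^{t+hv_0}\partial_s f(s,x+hv')\,\dd s$ by the absolute continuity in $t$. Split further $B_h=\int_t^{t+hv_0}\partial_s f(s,x)\,\dd s + R_h$; the principal part yields $hv_0\,\partial_t f(t,x)+o(h)$ at Lebesgue points of $s\mapsto\partial_s f(s,x)$, which, by the $\L^1$-factor of $\mu$, cover $\mu$-a.e.\ $(t,x)$. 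The remainder rewrites as
\[
R_h=\bigl[f(t+hv_0,x+hv')-f(t+hv_0,x)\bigr]-\bigl[f(t,x+hv')-f(t,x)\bigr]
\]
and must be shown to be $o(h)$.

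\textbf{Main obstacle.} The estimate $R_h=o(h)$ is the core difficulty. The naïve bound from the spatial Lipschitz constant only gives $R_h=O(h|v'|)$; extracting the extra smallness requires producing a jointly measurable selection of the spatial derivative $(s,y)\mapsto \nabla_x^V f(s,y)$ and establishing that $\mu$-a.e.\ $(t,x)$ is a Lebesgue point of $s\mapsto \nabla_x^V f(s,x)$ along the $\R$-fiber of $\mu$, again exploiting the product form $\mu=\L^1\otimes\mu_t$. Carrying out this selection and the associated averaging argument within the decomposability-bundle framework is the technical engine that lets one pass from $\Lip$ to $AC_t\Lip_x$, and is precisely what the earlier sections of the paper develop.
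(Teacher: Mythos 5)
Your opening structural claim, $V(\mu,(t,x))\subseteq \R\,e_t \oplus V(\mu_t,x)$, is false, and the proof does not survive its failure. Consider $d=2$ and $\mu_t := \H^1\restrict L_t$ where $L_t=\{(s,ts):s\in\R\}$, so $\mu=\L^1\otimes\mu_t$ is (a density times) $\H^2$ on the smooth surface $\Sigma=\{(t,s,ts)\}\subset\R^3$. At $\mu$-a.e.\ point the decomposability bundle of such a measure is the full tangent plane $T_{(t,s,ts)}\Sigma=\mathrm{span}\{(1,0,s),(0,1,t)\}$ (one obtains two transversal Alberti representations by sweeping $\L^2$ along the coordinate lines of the parametrization). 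On the other hand $V(\mu_t,(s,ts))=\mathrm{span}\{(1,t)\}$ in $\R^2$, so $\R e_t\oplus V(\mu_t,(s,ts))=\mathrm{span}\{(1,0,0),(0,1,t)\}$, which does not contain $(1,0,s)$ when $s\neq 0$ — i.e.\ your inclusion fails at $\mu$-a.e.\ point. Consequently a generic $v=(v_0,v')\in V(\mu,(t,x))$ has $v'\notin V(\mu_t,x)$, and the slicewise Alberti--Marchese derivative $\nabla_x^V f(t,\cdot)$ is simply not defined in the direction $v'$; your candidate $L(t,x)$ and the control of $A_h$ therefore break down at the outset. Separately, even within the scope you set up, the remainder $R_h=o(h)$ is not proved: you correctly identify it as the core difficulty but defer it to an unspecified ``selection and averaging argument'', so the proof is incomplete on its own terms. (A joint measurable selection of $\nabla_x^V f$ and a Lebesgue-point argument along $t$-fibers for that selection are nontrivial to produce, and without the bundle inclusion they would in any case estimate the wrong quantity.)

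The paper takes a different and more robust route that sidesteps both issues. Rather than splitting the increment by slicing, it constructs a \emph{single} sequence of genuinely Lipschitz functions $f_j$ with $f_j=f$ on $E_j\times\R^d$, where $E_j=\{Mg\leq j\}$ (maximal function of the time upper gradient) has small complement, interpolating linearly in $t$ on $E_j^c$ (Lemmas~\ref{lemma:one-dimensional}--\ref{lemma:approximation_multi-dimensional}). The key Lemma~\ref{lemma:approximation_multi-dimensional}(4) then shows $f-f_j$ is differentiable with differential zero whenever $t$ is a density point of $E_j$. Since each $f_j$ is Lipschitz, Theorem~\ref{thm:bundle_AM} gives differentiability of $f_j$ along the full $V(\mu,\cdot)$ directly, without ever needing to relate $V(\mu,(t,x))$ to $V(\mu_t,x)$, and writing $f=(f-f_j)+f_j$ yields differentiability of $f$ along $V(\mu,\cdot)$ for $\L^1$-a.e.\ $t\in E_j$; letting $j\to\infty$ and using $\mu=\L^1\otimes\mu_t$ finishes the argument. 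This reduction-to-Lipschitz strategy is what allows the delicate behaviour in the $t$-variable to be absorbed once and for all at the level of the approximating functions, instead of resurfacing in a remainder term that one would then have to control by hand.
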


To the best of our knowledge, this is the first extension of \cite{AM} to the non-Lipschitz setting.
This result, together with others in Section \ref{sec:bundle}, allows, for instance, to give a meaning to the pushforward of normal currents with respect to the space-time flow map $\Psi$, and this is the first stepping stone through which we develop our proof of the well-posedness of \eqref{eq:GTE}.

Finally, we comment on the requirement that the currents be normal. One may ask for instance what happens if the currents are only required to have (integrable in time) finite mass. In this case the first difficulty is that a priori we are not even able to give a meaning to the distributional formulation of \eqref{eq:GTE}. Nevertheless, we show by means of a simple example (see Section \ref{sec:finite_mass}) that if one could come up with any notion of solution satisfying two very natural requirements (namely \textit{stability} and \textit{consistency} with the smooth case, see Definition \ref{def:natural_family}), then there would exist two different finite mass solutions starting from the same initial datum. The vector field in this example is actually autonomous and Lipschitz, 
thus suggesting that the class of normal currents is the right setting to obtain well-posedness for the problem. \\

We close the Introduction by giving a summary of how the sections are organized. In Section \ref{sec:prelims} we recall some preliminaries and prove some lemmas concerning the decomposability bundle, the maximal function and flows of vector fields. In Section \ref{sec:bundle} we introduce the class $AC_t\Lip_x$ and we extend some results concerning the decomposability bundle to this class. In Section \ref{sec:existence} we show existence of a solution under the assumption \eqref{eq:Lip_assumption}. In Section \ref{sec:uniqueness} we show uniqueness of the solution under the assumption \eqref{eq:Lip_assumption}. Finally in Section \ref{sec:finite_mass} we give an example of non-uniqueness in the class of currents with finite mass.

\section{Preliminaries and notation}\label{sec:prelims}

Let $d \in \N$ be the ambient dimension. We will often use the projection maps $\tbf \colon\R\times \R^d\to \R$, $\pbf \colon\R\times \R^d\to \R^d$ from the (Euclidean) space-time $\R\times\R^d$ onto the time and space variables, respectively, which are given as
	\begin{equation}\label{eq:projections_def}
		\tbf(t,x) = t,  \qquad
		\pbf(t,x) = x.
	\end{equation}
	We also define, for every given $t\in\R$, the immersion map $\iota_t\colon \R^d \to \R\times\R^d$ by
	\[
	\iota_t(x):=(t,x),  \qquad (t,x) \in \R\times \R^d.
	\]
For the notation and the definitions of multilinear algebra and of currents we refer the reader to \cite{KrantzParks08book} and also \cite{BDR,BDR-Lipschitz}. 

Here we only recall the distributional formulation of the equation \eqref{eq:GTE}. If $b$ is a Borel vector field and $T_t$ is a family of normal currents in $\R^d$ such that 
\begin{equation*}
\int_0^1 |b(t,x)| \,\dd(\|T_t\|+\|\partial T_t \|)(x)\, \dd t < \infty  
\end{equation*}
then we say that $(T_t)_t$ is a \textit{weak solution} to \eqref{eq:GTE} if 
\begin{equation}\label{eq:def_weak_sense}
\int_0^1 \langle T_t, \omega\rangle \psi'(t) - \langle \Lcal_{b_t}T_t,\omega \rangle \psi(t) \,\dd t  = 0 
\end{equation}
for every $\omega \in \Dscr^k(\R^d)$ and every $\psi \in \Crm_c^\infty((0,1))$ where 
\[
\Lcal_{b_t} T_t := - b_t \wedge \partial T_t - \partial (b_t \wedge T_t).
\]

\subsection{A lemma on differentiation}

We will need the following elementary result: 

\begin{lemma}\label{lemma:chain_rule}
    Let $f:\R^n\to\R^m$ be differentiable in direction $v$ at $x\in\R^n$, and let $g:\R^m\to\R^p$ be differentiable in direction $df|_x [v]$ at $f(x)$. Assume that one of the following conditions holds:
    \begin{enumerate}
        \item either $f$ or $g$ is a linear map;
        \item $f$ and $g$ are Lipschitz.
    \end{enumerate}
    Then $g\circ f$ is differentiable in direction $v$, and satisfies the chain rule
    \[
    d(g\circ f)|_x[v]=dg|_{f(x)} [df|_x [v]].
    \]
\end{lemma}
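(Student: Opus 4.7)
My plan is to reduce everything to the identity
\[
\frac{g(f(x+tv))-g(f(x))}{t} = \frac{g(f(x)+tw+r(t))-g(f(x)+tw)}{t} + \frac{g(f(x)+tw)-g(f(x))}{t},
\]
where I set $w:=df|_x[v]$ and $r(t):=f(x+tv)-f(x)-tw$. By the assumption that $f$ is differentiable in direction $v$ at $x$, I have $r(t)/t\to 0$ as $t\to 0$, and by the assumption on $g$ the second summand converges to $dg|_{f(x)}[w]$. So the whole proof boils down to showing that the \emph{error} term
\[
E(t):=\frac{g(f(x)+tw+r(t))-g(f(x)+tw)}{t}
\]
tends to $0$ under either of hypotheses (1) or (2).

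First I will dispose of hypothesis (2), which is the most straightforward: if $g$ is Lipschitz with constant $L$, then $|E(t)|\le L\,|r(t)|/|t|\to 0$, and the conclusion follows immediately.

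For hypothesis (1), I would split into two sub-cases. If $f$ itself is linear, then $f(x+tv)=f(x)+tf(v)$, which forces $w=f(v)$ and $r(t)\equiv 0$, so $E(t)=0$ identically and the chain rule is trivial. If instead $g$ is linear, I can use linearity to distribute the difference and rewrite
\[
E(t)=\frac{g\bigl(r(t)\bigr)}{t}=g\!\left(\frac{r(t)}{t}\right),
\]
and since $g$ (being linear on a finite dimensional space) is continuous and $r(t)/t\to 0$, I conclude $E(t)\to g(0)=0$. In all three scenarios the three summands in the displayed identity combine to give the claimed chain rule $d(g\circ f)|_x[v]=dg|_{f(x)}[df|_x[v]]$.

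I do not anticipate any real obstacle: the only mildly subtle point is that when only a \emph{directional} derivative is known at $x$, we cannot replace $f(x+tv)-f(x)$ by its full differential applied to an arbitrary increment, so we must keep the perturbation strictly along the direction $tv$ and control the remainder by one of the two structural hypotheses on $g$ (Lipschitz continuity, or linearity that lets $g$ commute with the scaling by $1/t$). The case where $f$ is linear is essentially a tautology and does not require anything from $g$ beyond the existence of the stated directional derivative at $f(x)$.
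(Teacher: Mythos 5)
Your argument is correct and matches the paper's approach: write $f(x+tv)=f(x)+tw+r(t)$ with $r(t)=o(t)$ by directional differentiability of $f$, then absorb the error term via the Lipschitz continuity (or linearity) of $g$ before applying directional differentiability of $g$ at $f(x)$. You additionally fill in the details of case (1), which the paper omits as "even simpler," and you correctly observe along the way that in case (2) only the Lipschitz constant of $g$ (not that of $f$) is ever used.
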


\begin{proof}
    We only prove (2), as the proof of (1) is even simpler. For every $h\in \R$ we can write  
    \begin{align*}
        g(f(x+h df|_x[v]))-g(f(x))&=g(f(x)+h df|_x [v]+o(h))-g(f(x))\\
        &= g(f(x)+h df|_x[v])+o(h)-g(f(x))\\
        &=h dg|_{f(x)}[df|_x[v]]+o(h).
    \end{align*}
    This proves the desired claim.
\end{proof}

\subsection{Decomposability bundle}
We recall the concept of decomposability bundle, introduced in \cite{AM}, and some of its properties. Given a locally finite Borel measure $\mu$ in $\R^n$, the decomposability bundle is a map $x\mapsto V(\mu,x)$ that to $\mu$-a.e. point $x$ associates a linear subspace $V(\mu,x)$ that is maximal (in a suitable way) with respect to the following property:

\begin{theorem}[{\cite[Theorem~1.1]{AM}}]\label{thm:bundle_AM}
    Let $\mu$ be a locally finite measure in $\R^n$ and let $f:\R^n\to\R^m$ be a Lipschitz function. Then $f$ is differentiable along $V(\mu,x)$ at $\mu$-a.e. $x$. 
\end{theorem}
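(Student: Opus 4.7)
The plan is to exploit the characterization of $V(\mu,x)$ through rectifiable decompositions of $\mu$: at $\mu$-a.e.\ point $x$, the subspace $V(\mu,x)$ is spanned (in the appropriate sense) by the tangent directions of Lipschitz curves along which $\mu$ admits a disintegration. I would take this as the working definition, since it is what makes contact with the Lipschitz regularity of $f$.

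As a first step, I would establish pointwise directional differentiability. Fix $v\in V(\mu,x)$ and a decomposition
\[
\mu=\int_{S} \nu_{s}\,\dd\rho(s),
\]
where each $\nu_{s}$ is a $1$-rectifiable measure supported on a Lipschitz curve $\gamma_{s}$ whose tangent is $v$ at $\nu_{s}$-a.e.\ point (this exists by the very construction of $V(\mu,\cdot)$). Restricting $f$ to each $\gamma_{s}$ yields a Lipschitz map $\R\to\R^{m}$, differentiable almost everywhere by the classical one-dimensional Rademacher theorem; combining this with the disintegration via Fubini gives the existence of $\partial_{v}f(x)$ at $\mu$-a.e.\ $x$ with $v\in V(\mu,x)$. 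By a measurable selection of a Borel frame $\{v_{j}(x)\}$ of $V(\mu,\cdot)$, and by running the argument along a countable dense set of directions, I would then obtain, simultaneously $\mu$-a.e., directional differentiability of $f$ in each $v_{j}(x)$ and in all of their rational combinations.

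The hardest step, and really the heart of the theorem, is upgrading this pointwise directional differentiability to genuine differentiability along the whole subspace: one needs a linear map $L(x)\colon V(\mu,x)\to\R^{m}$ such that
\[
f(x+v)-f(x)=L(x)[v]+o(|v|)\qquad \text{as } v\to 0 \text{ in } V(\mu,x).
\]
In general this implication fails even for Lipschitz functions, so one must use extra structure. My plan here is to combine two ingredients: first, a maximal-function estimate along the decomposition — in the spirit of the classical proof of Rademacher's theorem — to obtain uniform control of the remainder over thin cones of directions, using the Lipschitz bound on $f$; second, the maximality of $V(\mu,\cdot)$, which ensures that every direction of the subspace is actually realized by some curve in the disintegration, so that the uniform control propagates from a countable dense set of directions to the whole subspace. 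Turning this last step into a rigorous argument — which essentially amounts to reconstructing the full Alberti--Marchese machinery — will be by far the most delicate part of the proof.
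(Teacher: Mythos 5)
The paper does not prove this statement at all: Theorem~\ref{thm:bundle_AM} is cited verbatim from Alberti and Marchese \cite{AM} and used as a black box. There is therefore no ``paper's proof'' to compare against; the only sensible comparison is with the actual proof in \cite{AM}, which occupies a substantial part of a sixty-page paper.

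Your sketch has the right flavor at the level of ``decompose $\mu$ into one-rectifiable pieces, run one-dimensional Rademacher along each curve, disintegrate'', but as a proof proposal it has a genuine gap that you yourself flag: the passage from directional differentiability along a dense set of directions to full (linear) differentiability on $V(\mu,x)$ is the entire content of the theorem, and you leave it as ``reconstructing the full Alberti--Marchese machinery.'' In addition, Step~1 is not as routine as presented. First, the decomposability bundle is defined through a \emph{maximality} condition over decompositions of $\mu$ in the sense of one-dimensional normal currents, and a single decomposition $\mu=\int_S \nu_s\,\dd\rho(s)$ realizing \emph{all} directions of $V(\mu,x)$ at $\mu$-a.e.\ $x$ simultaneously is not given for free; constructing such a ``universal'' decomposition is already a nontrivial part of \cite{AM}. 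Second, one-dimensional Rademacher along a Lipschitz curve $\gamma_s$ gives a.e.\ existence of $\frac{\dd}{\dd t}(f\circ\gamma_s)$, which is differentiability of $f$ \emph{along the curve}, not the existence of the ambient directional derivative $\partial_v f(x)$ — upgrading the former to the latter requires the curve to be ``approximately straight'' near $x$ in a controlled way, and this needs a separate argument. So while the proposal correctly identifies the skeleton of the Alberti--Marchese strategy, it does not constitute a proof, and should not be treated as one; for the purposes of this paper the result is, and should remain, an external citation.
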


In the following we will denote by $d_V$ the restriction of the differential to $V(\mu,x)$. Moreover, if $T$ is a normal current, we denote by $d_T$ the quantity $d_V$ where $V=V(\|T\|,x)$. 
Below we rephrase the statement that can be found in \cite[Proposition~5.17]{AM}, with slightly different assumptions. We require the map $f$ to be proper, instead of assuming that the current has compact support, but the two statements are virtually the same. Moreover we write $\langle \omega(f(x)),\Wedge^k d_T f[\tau(x)]\rangle$ in place of $\langle (f_T^\#)\omega(x),\tau(x)\rangle$.

\begin{proposition}[Pushforward {\cite[Proposition~5.17]{AM}}]
Let $T=\tau\mu$ be a normal $k$-current in $\R^n$, and let $f:\R^n\to\R^n$ be a proper Lipschitz map. Then the pushforward $f_*T$ admits the representation
\begin{equation}\label{eq:pushforward_lipschitz}
    \langle f_*T,\omega\rangle=\int \langle \omega(f(x)),\Wedge^kd_Tf [\tau(x)] \rangle\,\dd\|T\|(x)\qquad\text{for every $\omega\in \Dscr^k(\R^n)$.}
\end{equation}
\end{proposition}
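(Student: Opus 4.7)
The plan is to reduce to the classical smooth pushforward formula via mollification, exploiting the structural features of the decomposability bundle that make the resulting limits meaningful $\|T\|$-almost everywhere.

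First, I would observe that since $T=\tau\|T\|$ is normal, a standard property of the bundle ensures $\tau(x)\in\Wedge^k V(\|T\|,x)$ for $\|T\|$-a.e.\ $x$; combined with Theorem~\ref{thm:bundle_AM} applied to $f$, this guarantees that $\Wedge^k d_Tf[\tau(x)]$ is well defined for $\|T\|$-a.e.\ $x$, so that the right-hand side of \eqref{eq:pushforward_lipschitz} is meaningful. Properness of $f$ and finiteness of the mass of $T$ on $f^{-1}(\spt\omega)$ ensure that the integral converges absolutely.

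Next, I would mollify. Let $\rho_\epsilon$ be a standard mollifier and set $f_\epsilon:=f*\rho_\epsilon$: these maps are smooth, satisfy $\Lip(f_\epsilon)\le\Lip(f)$, and converge to $f$ uniformly on compact sets. For each smooth $f_\epsilon$ the classical change-of-variable identity
\[
\langle (f_\epsilon)_*T,\omega\rangle=\int \langle \omega(f_\epsilon(x)),\Wedge^k df_\epsilon|_x[\tau(x)]\rangle\,\dd\|T\|(x)
\]
holds. I would then pass to the limit $\epsilon\to 0$: the left-hand side converges to $\langle f_*T,\omega\rangle$ by the standard continuity of the pushforward of normal currents along uniformly Lipschitz, uniformly convergent maps; on the right, $\omega(f_\epsilon(x))\to\omega(f(x))$ uniformly by continuity of $\omega$, and the integrands are dominated by $\|\omega\|_\infty\Lip(f)^k|\tau(x)|$, which is in $L^1(\|T\|)$ on compact sets. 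Dominated convergence then delivers the identity, provided one can justify the pointwise convergence $df_\epsilon|_x[\tau(x)]\to d_Tf|_x[\tau(x)]$ for $\|T\|$-a.e.\ $x$.

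The hard part will be precisely this last pointwise convergence, which cannot be obtained from Rademacher's theorem alone: $\|T\|$ may be singular with respect to Lebesgue measure, so Lebesgue-a.e.\ convergence of $df_\epsilon$ is useless here. Instead I would exploit the defining feature of the decomposability bundle: the measure $\mu=\|T\|$ admits a Fubini-type decomposition along Lipschitz curves whose tangents span $V(\mu,x)$, and along such curves $f$ is absolutely continuous, so its derivative coincides with the $\mathcal{H}^1$-a.e.\ limit of the derivatives of $f_\epsilon$ restricted to the curve. Integrating over the parameter of the decomposition yields the desired $\|T\|$-a.e.\ convergence along $V(\|T\|,x)$, and since $\tau(x)$ lies in $\Wedge^k V(\|T\|,x)$ the conclusion follows.
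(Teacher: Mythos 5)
Note that the paper itself does not prove this proposition: it is quoted (with a minor rephrasing, as the authors point out) from Alberti--Marchese \cite[Proposition~5.17]{AM}, so there is no in-paper argument to compare against. Your mollification scheme nonetheless has a genuine gap, precisely at the step you flag as the hard one. You need $\Wedge^k df_\epsilon|_x[\tau(x)]\to\Wedge^k d_Tf|_x[\tau(x)]$ for $\|T\|$-a.e.\ $x$, but the justification you sketch does not deliver it. Along a Lipschitz curve $\gamma$ of an Alberti--Marchese decomposition, $f_\epsilon\circ\gamma\to f\circ\gamma$ uniformly with equibounded Lipschitz constants, which yields only weak* convergence of $(f_\epsilon\circ\gamma)'$ to $(f\circ\gamma)'$ in $L^\infty$, not pointwise a.e.\ convergence. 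Moreover $(f_\epsilon\circ\gamma)'(s)=df_\epsilon|_{\gamma(s)}[\gamma'(s)]$ averages $\nabla f$ over a full $\epsilon$-ball around $\gamma(s)$, whereas $(f\circ\gamma)'(s)$ depends only on $f|_\gamma$; when $\|T\|$ is singular with respect to Lebesgue measure --- the very situation the decomposability bundle is designed for --- $\|T\|$-a.e.\ $x$ need not be a Lebesgue point of $\nabla f$, so no Rademacher-type argument is available to upgrade the convergence. Finally, since $v\mapsto\Wedge^k v$ is nonlinear, you cannot trade pointwise for weak* convergence in the dominated-convergence step.

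The standard way to repair this --- and the way the present paper proceeds in the analogous Lemma~\ref{lemma:Lipschitz_approximation}, modelled on \cite[Corollary~8.3]{AM} --- is to replace $f*\rho_\epsilon$ by a Lusin-type approximation: one constructs Lipschitz maps $f_j$ converging uniformly to $f$ that \emph{coincide} with $f$ on sets $E_j$ with $\|T\|(E_j^c)\to 0$, in such a way that $f-f_j$ has zero differential at $\|T\|$-a.e.\ point of $E_j$ (hence $d_V f_j=d_Vf$ and $\Wedge^k d_Vf_j[\tau]=\Wedge^k d_Vf[\tau]$ there by locality), while the contribution of $E_j^c$ is controlled by uniform Lipschitz bounds. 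This sidesteps entirely the comparison between the classical gradient of a mollification and the bundle differential, which is exactly the comparison your argument fails to control.
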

We also recall the following lemma, whose proof can be found in \cite[Lemma 2.1]{BDR-Lipschitz}: 
\begin{lemma}\label{lemma:wedge_perp}\phantom{.}
		\begin{enumerate}
			\item[(i)] Let $\tau\in\Wedge_1 \R^n$, $\sigma\in\Wedge_k(\R^n)$ and $\alpha\in\Wedge^1(\R^n)$, $\beta\in\Wedge^k(\R^n)$. Suppose that $\dprb{ v,\alpha}=0$ for every $v\in \mathrm{span}(\sigma)$. Then
			\[
			\dprb{ \tau\wedge\sigma,\alpha\wedge\beta}=\dprb{\tau,\alpha} \dprb{ \sigma,\beta}.
			\]
			\item[(ii)] Let $\tau\in\Wedge_1 \R^n$, $\sigma\in\Wedge_k(\R^n)$ and $\alpha\in\Wedge^{k+1}(\R^n)$. Suppose that $\tau\perp \mathrm{span}(\alpha)$.
			Then,
			\[
			\dprb{ \tau\wedge\sigma,\alpha}=0.
			\]
		\end{enumerate}
	\end{lemma}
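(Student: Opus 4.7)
My plan is to reduce both identities to a direct computation for simple multivectors and covectors, using the classical determinant formula
\[
\dprb{v_1\wedge\cdots\wedge v_m,\,\alpha_1\wedge\cdots\wedge\alpha_m} \;=\; \det\bigl(\dprb{v_i,\alpha_j}\bigr)_{i,j=1}^{m},
\]
and then to exploit the orthogonality hypotheses to produce a vanishing column or row in the resulting matrix. Both (i) and (ii) should fall out of the same template.

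For part (i), I would first choose a basis of $\R^n$ adapted to $\mathrm{span}(\sigma)$ and expand $\sigma$ as a linear combination of simple $k$-vectors whose factors all lie in $\mathrm{span}(\sigma)$; similarly, I expand $\beta$ in the standard basis of $\Wedge^k(\R^n)$. By multilinearity of the wedge product and of the pairing, the claim reduces to the case $\sigma = v_1\wedge\cdots\wedge v_k$ with $v_i\in\mathrm{span}(\sigma)$ and $\beta = \beta_1\wedge\cdots\wedge\beta_k$ simple. Then $\dprb{\tau\wedge\sigma,\,\alpha\wedge\beta}$ is the determinant of the $(k+1)\times(k+1)$ matrix whose $(1,1)$-entry is $\dprb{\tau,\alpha}$, whose first column below the top is $(\dprb{v_i,\alpha})_{i=1}^{k}$, whose first row to the right is $(\dprb{\tau,\beta_j})_{j=1}^{k}$, and whose lower-right $k\times k$ block is $(\dprb{v_i,\beta_j})_{i,j}$. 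The hypothesis forces every entry $\dprb{v_i,\alpha}$ in the first column below the top to vanish, and expanding along that column yields $\dprb{\tau,\alpha}\cdot\det(\dprb{v_i,\beta_j}) = \dprb{\tau,\alpha}\,\dprb{\sigma,\beta}$, as required.

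Part (ii) follows symmetrically. I would choose a basis of $(\R^n)^*$ adapted to $\mathrm{span}(\alpha)$ and expand $\alpha$ into simple $(k+1)$-covectors $\alpha_1\wedge\cdots\wedge\alpha_{k+1}$ each of whose factors $\alpha_j$ belongs to the dual of $\mathrm{span}(\alpha)$, so that $\dprb{\tau,\alpha_j}=0$ for every $j$ by the assumption $\tau\perp\mathrm{span}(\alpha)$; similarly expand $\sigma$ into simple summands. For each pair of summands the pairing $\dprb{\tau\wedge v_1\wedge\cdots\wedge v_k,\,\alpha_1\wedge\cdots\wedge\alpha_{k+1}}$ is the determinant of a $(k+1)\times(k+1)$ matrix whose entire first row vanishes, hence it is $0$, and the full pairing vanishes by multilinearity.

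The only point requiring a little care in either case is ensuring that the reduction to simple summands preserves the orthogonality hypothesis, which is why I choose bases adapted to $\mathrm{span}(\sigma)$ in (i) and to $\mathrm{span}(\alpha)$ in (ii) before expanding. After this setup, the remainder in both parts is a one-line cofactor expansion, so I do not foresee any real obstacle.
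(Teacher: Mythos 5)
Your argument is correct. Note that this paper does not actually prove the lemma but only cites \cite[Lemma 2.1]{BDR-Lipschitz} for it, so there is no in-text proof to compare against; your reduction to simple multivectors via bases adapted to $\mathrm{span}(\sigma)$ and $\mathrm{span}(\alpha)$, followed by the determinant (Cauchy--Binet/Gram) formula for the duality pairing and a cofactor expansion along the vanishing column (part (i)) or vanishing row (part (ii)), is the standard and correct way to establish it.
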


\subsection{Maximal function}\label{sec:maximal}
Given a function $f\in L^1(\R^n)$ we define the \emph{uncentered maximal function} of $f$ as
\[
M f(x) := \sup\left\{\fint_{B} |f(y)|\, \dd y: B \text{ is a ball with } x \in B\right\}.
\]
We recall the following weak $(1,1)$ estimate: there exists a dimensional constant $C>0$ such that
\begin{equation}\label{eq:weak1_maximal_function}
|\{x:Mf(x)>\lambda\}|\le \frac{C}{\lambda}\|f\|_{L^1(\R^n)},
\end{equation}
see, e.g., \cite[Theorem 3.17]{folland} and also \cite[Exercise 23, Chapter 3]{folland}. 
We recall that if $f \in AC(\R)$ then $f$ is differentiable $\L^1$-a.e.; if $g$ is an $L^1$ function such that $|f'(x)|\le g(x)$ for $\L^1$-a.e. $x$ then the restriction of $f$ to the set $\{x:Mg(x)\le \lambda\}$ is $\lambda$-Lipschitz.

\subsection{Flows}\label{subsec:flows}
In this paragraph we recall some results on the Cauchy-Lipschitz theory for vector fields satisfying \eqref{eq:Lip_assumption}. 
By \cite[Lemma 8.1.4]{AGS}, given such a field $b \colon [0,1] \times \R^d \to \R^d$, there exists a unique flow map for $b$, namely there exists a unique function $\Phi \colon [0,1] \times [0,1] \times \R^d \to \R^d$ such that for every $s\in [0,1]$ and for every $x \in \R^d$ the curve $t \mapsto \Phi(t,s,x)=:\Phi_t^s(x)$ is the unique solution to the ODE driven by $b$ starting at time $s$ from the point $x$, i.e. 
\[
\begin{cases}
    \frac{d}{dt}\Phi_t^s(x)=b_t(\Phi_t^s(x))\\
    \Phi_s^s(x)=x
\end{cases}.
\]
The ordinary differential equation is intended in the integral sense, namely for every $s,t\in[0,1]$ and $x\in\R^d$
\[
\Phi_t^s(x) = x + \int_s^t b_{r}(\Phi_r^s(x))\,\dd r. 
\]
As an easy consequence of the uniqueness of the trajectories we have the validity of the following semigroup-like property: 
\begin{equation}\label{eq:semigroup-like}
\forall s,s'\in[0,1], \, \forall x \in \R^d: \qquad \Phi_t^s(\Phi_s^{s'}(x)) = \Phi_t^{s'}(x).
\end{equation}
We define for later use the map $\Psi \colon [0,1]\times \R^d \to \R^d$ by setting 
\begin{equation}\label{eq:def_Psi}
\Psi(t,x):=(t,\Phi_t^0(x)). 
\end{equation}
Observe that
\[
\Psi^{-1}(s,y)=(s,(\Phi_s^0)^{-1}(y)).
\]

The following lemma contains some regularity properties of the flow map. 

\begin{lemma}[The flow is $AC_t\Lip_x$]\label{lemma:gronwall}
Let $b:[0,1]\times\R^d\to\R^d$ satisfy \eqref{eq:Lip_assumption}.
Then for every $s\in[0,1]$ the flow map $(t,x)\mapsto \Phi^s_t(x)$ is uniquely defined and belongs to $AC_t\Lip_x([0,1]\times\R^d)$, and more precisely for every $t_1<t_2$ and $x,y\in\R^d$ it holds
\[
|\Phi^s_{t_1}(x)-\Phi^s_{t_2}(y)|\le \exp\left(\int_{s}^{t_1} \Lip(b_r)\,\dd r\right)|x-y|+\int_{t_1}^{t_2} \|b_r\|_{C^0(\R^d)}\,\dd r.
\]
\end{lemma}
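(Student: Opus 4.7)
The plan is to split the distance between the two points by the triangle inequality into a spatial part (same time $t_1$, different starting points $x,y$) and a temporal part (same starting point $y$, different times $t_1,t_2$), then handle each with the integral equation defining the flow. Existence and uniqueness of $\Phi^s_t$ is the standard Carathéodory-type result already cited from \cite[Lemma 8.1.4]{AGS}, so the work is just in the quantitative estimate. For simplicity I assume $s\le t_1<t_2$; the other orderings are analogous with the integrals of $\Lip(b_r)$ and $\|b_r\|_{C^0}$ taken over the appropriate oriented interval.

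For the spatial part, the integral equation gives
\[
\Phi^s_{t_1}(x)-\Phi^s_{t_1}(y)=(x-y)+\int_s^{t_1}\bigl[b_r(\Phi^s_r(x))-b_r(\Phi^s_r(y))\bigr]\,\dd r,
\]
so using $|b_r(u)-b_r(v)|\le\Lip(b_r)|u-v|$ we obtain
\[
|\Phi^s_{t_1}(x)-\Phi^s_{t_1}(y)|\le |x-y|+\int_s^{t_1}\Lip(b_r)\,|\Phi^s_r(x)-\Phi^s_r(y)|\,\dd r.
\]
Since $r\mapsto \Lip(b_r)$ is in $L^1([0,1])$ by \eqref{eq:Lip_assumption}, Grönwall's inequality in integral form (applied to $u(t_1):=|\Phi^s_{t_1}(x)-\Phi^s_{t_1}(y)|$, which is continuous in $t_1$) yields
\[
|\Phi^s_{t_1}(x)-\Phi^s_{t_1}(y)|\le |x-y|\exp\left(\int_s^{t_1}\Lip(b_r)\,\dd r\right).
\]
For the temporal part, the integral equation gives directly
\[
\Phi^s_{t_2}(y)-\Phi^s_{t_1}(y)=\int_{t_1}^{t_2}b_r(\Phi^s_r(y))\,\dd r,
\]
so $|\Phi^s_{t_1}(y)-\Phi^s_{t_2}(y)|\le\int_{t_1}^{t_2}\|b_r\|_{C^0(\R^d)}\,\dd r$. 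Combining the two bounds by the triangle inequality
\[
|\Phi^s_{t_1}(x)-\Phi^s_{t_2}(y)|\le |\Phi^s_{t_1}(x)-\Phi^s_{t_1}(y)|+|\Phi^s_{t_1}(y)-\Phi^s_{t_2}(y)|
\]
gives exactly the claimed estimate.

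There is no real obstacle here; the only thing to be a bit careful with is that the Lipschitz constants $\Lip(b_r)$ only belong to $L^1$ in time, but this is precisely what Grönwall's integral form accommodates, and the $AC_t$ (rather than Lipschitz in $t$) regularity of $\Phi^s$ comes from the second estimate: $t\mapsto\int_0^t\|b_r\|_{C^0}\,\dd r$ is an $L^1$ modulus of absolute continuity that bounds the time variation uniformly in $x$.
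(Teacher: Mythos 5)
Your proof is correct and follows essentially the same route as the paper's: split the increment by the triangle inequality into a spatial part at time $t_1$ (controlled by Grönwall in integral form) and a temporal part at the point $y$ (controlled directly by the sup norm of $b_r$), which is exactly what the paper does via its estimates \eqref{eq:ACLip_estimates_flow_1} and \eqref{eq:ACLip_estimates_flow_2}. The only cosmetic difference is that you make the triangle-inequality decomposition explicit and remark on the ordering $s\le t_1$, which the paper leaves implicit.
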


\begin{proof} 
Fix $s \in \R$ and $x \in \R^d$. Then by the definition of integral curve we have 
\[
\Phi_t^s(x) = x + \int_s^t b_{r}(\Phi_r^s(x))\, \dd r. 
\]
This implies that if $t_1<t_2$ it holds 
\begin{equation}\label{eq:ACLip_estimates_flow_1}
|\Phi_{t_1}^s(x) - \Phi_{t_2}^s(x)| \le \int_{t_1}^{t_2} |b_r(\Phi^s_r(x))| \,\dd r \le \int_{t_1}^{t_2} \|b_r\|_{C^0(\R^d)} \,\dd r.
\end{equation}
Fix now $t\in[0,1]$ and $x,y\in\R^d$. By using the definition of integral curve we have 
    \begin{align*}
    |\Phi_t^s(x)-\Phi_t^s(y)| & = \left| x+\int_s^t b_r(\Phi_r^s(x)) \, \dd r - 
    y-\int_s^t b_r(\Phi_r^s(y)) \, \dd r \right| \\ 
    & \le |x-y| + \int_s^t | b_r(\Phi_r^s(x))-b_r(\Phi_r^s(y))| \, \dd r\\
    & \le |x-y| + \int_s^t \Lip(b_r) |\Phi_r^s(x)-\Phi_r^s(y)| \, \dd r.
    \end{align*}
    By applying the integral form of Gronwall inequality we conclude that 
    \begin{equation}\label{eq:ACLip_estimates_flow_2}
    |\Phi_t^s(x)-\Phi_t^s(y)| \le \exp{\left(\int_s^t \Lip(b_r)\, \dd r\right)} |x-y|. 
    \end{equation}
The conclusion follows by \eqref{eq:ACLip_estimates_flow_1} and \eqref{eq:ACLip_estimates_flow_2} choosing $t=t_1$.
\end{proof}

We now show that under the assumption \eqref{eq:Lip_assumption} we can find a ‘‘universal" set of times at which the vector field $b$ has the Lebesgue property at every point in space.

\begin{lemma}\label{lemma:uniform_Leb_points} Let $b\colon [0,1]\times \R^d \to \R^d$ satisfy \eqref{eq:Lip_assumption} and let 
\[
G:=\{(t,x) \in [0,1] \times \R^d: \, t \text{ is a Lebesgue point of } s \mapsto b_s(x)\}. 
\]
Then there exists a $\L^1$-negligible set $N \subset [0,1]$ such that 
\[
N^c \times \R^d \subset G,
\]
and moreover every $t\in N^c$ is a Lebesgue point of the map $t\mapsto \Lip(b_t)$.
\end{lemma}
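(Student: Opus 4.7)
The plan is to use separability of $\R^d$ to reduce to a countable family of one-dimensional statements, and then to exploit the Lipschitz bound in space to transfer the Lebesgue property from a countable dense set to all of $\R^d$.

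First, I would fix a countable dense subset $\{x_k\}_{k\in\N}\subset \R^d$. Since $|b_s(x_k)|\le \|b_s\|_{C^0(\R^d)}$ and by \eqref{eq:Lip_assumption} the right-hand side is in $L^1([0,1])$, each of the $\R^d$-valued maps $s\mapsto b_s(x_k)$ lies in $L^1([0,1];\R^d)$, so there is a null set $N_k\subset[0,1]$ outside of which every $t$ is a Lebesgue point of $s\mapsto b_s(x_k)$. Similarly, \eqref{eq:Lip_assumption} gives $s\mapsto \Lip(b_s)\in L^1([0,1])$, so one has a null set $N'$ outside which every $t$ is a Lebesgue point of $s\mapsto \Lip(b_s)$. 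I would then set $N:=N'\cup\bigcup_{k\in\N}N_k$, which is still $\L^1$-negligible.

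The key step is to show that for every $t\in N^c$ and every $x\in\R^d$, $t$ is a Lebesgue point of $s\mapsto b_s(x)$. Fix $x\in\R^d$ and $x_k$ from the dense family. By the triangle inequality
\begin{align*}
\frac{1}{2r}\int_{t-r}^{t+r}|b_s(x)-b_t(x)|\,\dd s
&\le \frac{1}{2r}\int_{t-r}^{t+r}|b_s(x)-b_s(x_k)|\,\dd s\\
&\quad +\frac{1}{2r}\int_{t-r}^{t+r}|b_s(x_k)-b_t(x_k)|\,\dd s
+|b_t(x_k)-b_t(x)|.
\end{align*}
Using $|b_s(x)-b_s(x_k)|\le \Lip(b_s)|x-x_k|$, the first term is bounded by $|x-x_k|\cdot\frac{1}{2r}\int_{t-r}^{t+r}\Lip(b_s)\,\dd s$, which tends to $|x-x_k|\,\Lip(b_t)$ as $r\to 0$ because $t\notin N'$. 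The second term tends to $0$ because $t\notin N_k$. The third term is bounded by $\Lip(b_t)|x-x_k|$. Thus
\[
\limsup_{r\to 0}\frac{1}{2r}\int_{t-r}^{t+r}|b_s(x)-b_t(x)|\,\dd s\le 2\Lip(b_t)|x-x_k|,
\]
and letting $k\to\infty$ gives the Lebesgue property, hence $(t,x)\in G$.

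There is no real obstacle here beyond correctly exploiting the spatial Lipschitz bound to pass from the countable dense set to all of $\R^d$; the mild subtlety is just to ensure that the exceptional set also captures the Lebesgue points of $t\mapsto\Lip(b_t)$, which is used in the first-term estimate above and is also part of the conclusion.
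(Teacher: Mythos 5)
Your proof is correct, and it takes a somewhat different route than the paper. The paper first uses Fubini--Tonelli to show that for a.e.\ $t$ the section $G_t := \{x : (t,x)\in G\}$ has full $\L^d$-measure (hence is dense), and then proves separately that $G_t$ is \emph{closed} — via the same kind of three-term estimate you use, combined with the boundedness of the averages of $\Lip(b_s)$ — so that density plus closedness forces $G_t = \R^d$. You instead bypass the topological step entirely: you fix a countable dense set $\{x_k\}$, collect the (countably many) exceptional null sets for $s\mapsto b_s(x_k)$ and for $s\mapsto \Lip(b_s)$, and transfer the Lebesgue property to an arbitrary $x$ by the direct $3\eps$ estimate, where the term involving $\fint_{t-r}^{t+r}\Lip(b_s)\,\dd s$ is controlled because $t$ is a Lebesgue point of $\Lip(b_\cdot)$. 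This is a cleaner, more direct argument; the paper's version works with the a.e.-full section rather than a chosen dense set, which is a slightly softer way to get density but then needs the extra closedness step to upgrade to ``everywhere.'' Both arguments hinge on the same two ingredients — the spatial Lipschitz bound and the $L^1$-integrability of $\Lip(b_t)$ — so the essential content is the same.
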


\begin{proof}
    Given a set $A \subset \R^{d+1}$ we denote as usual its sections by 
    \[
    A_t :=\{x \in \R^d: (t,x) \in A\}, \qquad  A^x :=\{t \in \R: (t,x) \in A\}. 
    \]
    By Lebesgue's differentiation Theorem, it holds $\L^1((G^x)^c)=0$ for every $x \in \R^d$. In particular, by Fubini-Tonelli,
    \begin{align*}
        0 & = \int_{\R^d} \L^1((G^x)^c)\, \dd x
        = \L^{d+1}(G^c) 
        = \int_{\R} \L^d((G_t)^c)\, \dd t. 
    \end{align*}
    We deduce that there exists a set $N_1$, with $\L^1(N_1)=0$, such that if $t \notin N_1$, it holds $\L^d((G_t)^c)=0$: in particular $G_t$ is dense in $\R^d$ for every $t \notin N_1$.
    Now, in view of the assumptions on the vector field, there is a $\L^1$-negligible subset $N_2 \subset \R$ such that if $t \notin N_2$ then $t$ is a Lebesgue point of $s\mapsto \Lip(b_s)$ - in particular, $\Lip(b_t)<\infty$. 
    We now define $N:=N_1 \cap N_2$. 
    
    We claim that the set $G_t$ is closed for every $t \notin N$. Combining this with the density, we conclude, because we necessarily have that $G_t = \R^d$ for every $t \notin N$.

    In order to show that $G_t$ is closed we can argue as follows: pick $x \notin G_t$ so that we can find $\delta>0$ and a sequence $\eps_j \to 0$ such that 
    \[
    \frac{1}{\eps_j}\int_t^{t+\eps_j} |b_s(x)-b_t(x)|\,\dd s > \delta 
    \]
    for every $j \in \N$. Let $\eta>0$ to be chosen later. For $x' \in B_\eta(x)$ we have 
    \begin{align*}
         \frac{1}{\eps_j}\int_t^{t+\eps_j} |b_s(x')-b_t(x')|\,\dd s & \ge \frac{1}{\eps_j}\int_t^{t+\eps_j} |b_t(x)-b_s(x)| - |b_s(x')-b_s(x)| - |b_t(x')-b_t(x)|\,\dd s \\ 
         & \ge \delta - |x-x'|\left( \Lip(b_t) + \frac{1}{\eps_j}\int_t^{t+\eps_j} \Lip(b_s)\,\dd s\right) \\ 
         & \ge \delta - \eta \left( \Lip(b_t) + \frac{1}{\eps_j}\int_t^{t+\eps_j} \Lip(b_s)\,\dd s\right).
    \end{align*}
    Since $t \notin N$, we have that the second summand is finite (and uniformly bounded in $j$). Thus we can pick $\eta>0$ small enough so that 
\[
\delta - \eta \left( \Lip(b_t) + \frac{1}{\eps_j}\int_t^{t+\eps_j} \Lip(b_s)\,\dd s\right) > \frac{\delta}{2}
\]
which means that every $x' \in B_\eta(x)$ cannot lie in $G_t$, i.e. $(G_t)^c$ is open. This concludes the proof.
\end{proof}

The following lemma establishes some differentiability properties of the flow.
\begin{lemma}\label{lemma:diff_flow} 
 Let $b\colon [0,1]\times \R^d \to \R^d$ satisfy \eqref{eq:Lip_assumption}. Then, it holds: 
 \begin{enumerate}
     \item $D\Psi(t,x)[(1,0)] = (1, b_t(\Phi^0_t(x)))$ for every $t \in (0,1)$ and $x \in \R^d$;
     \item $D\Psi^{-1}(\Psi(t,x))[(1,b_t(\Phi_t^0(x)))]=(1,0)$ for $\L^1$-a.e $t \in [0,1]$ and for every $x \in \R^d$.
\end{enumerate}
\end{lemma}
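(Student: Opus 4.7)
The plan is to prove (1) directly from the integral form of the flow ODE combined with the ``uniform'' Lebesgue set supplied by Lemma \ref{lemma:uniform_Leb_points}, and then to deduce (2) from (1) together with the Lipschitz regularity of the inverse flow $(\Phi_s^0)^{-1}$ given by Lemma \ref{lemma:gronwall}.

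For (1), I would compute the difference quotient in the $(1,0)$ direction as
\[
\frac{\Psi(t+h,x)-\Psi(t,x)}{h} = \left(1,\,\frac{1}{h}\int_t^{t+h} b_r(\Phi_r^0(x))\,\dd r\right),
\]
using the integral form of the ODE. To prove that the second component tends to $b_t(\Phi_t^0(x))$ at every $t\in N^c$, where $N$ is the null set of Lemma \ref{lemma:uniform_Leb_points}, I would split it as $A_h+B_h$ with
\[
A_h := \frac{1}{h}\int_t^{t+h}\!\!\big(b_r(\Phi_r^0(x))-b_r(\Phi_t^0(x))\big)\,\dd r, \quad B_h := \frac{1}{h}\int_t^{t+h}\!\!\big(b_r(\Phi_t^0(x))-b_t(\Phi_t^0(x))\big)\,\dd r.
\]
The term $B_h$ vanishes because Lemma \ref{lemma:uniform_Leb_points} guarantees that every $t\in N^c$ is a Lebesgue point of $r\mapsto b_r(y)$ for \emph{every} $y\in\R^d$, in particular for $y=\Phi_t^0(x)$. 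For $A_h$, I use the spatial Lipschitz estimate on $b_r$ together with $|\Phi_r^0(x)-\Phi_t^0(x)|\le \int_t^r\|b_s\|_{C^0(\R^d)}\,\dd s$ from Lemma \ref{lemma:gronwall} to obtain
\[
|A_h|\le \bigg(\frac{1}{|h|}\int_t^{t+h}\Lip(b_r;\R^d)\,\dd r\bigg)\bigg(\int_t^{t+h}\|b_s\|_{C^0(\R^d)}\,\dd s\bigg),
\]
where the first factor stays bounded for $|h|$ small (again by Lemma \ref{lemma:uniform_Leb_points}, $t$ is a Lebesgue point of $r\mapsto\Lip(b_r;\R^d)$) and the second tends to $0$ by absolute continuity of the integral.

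For (2), the shortest route is a direct expansion of $\Psi^{-1}(s,y)=(s,(\Phi_s^0)^{-1}(y))$ at the point $\Psi(t,x)=(t,\Phi_t^0(x))$ in the direction $(1,b_t(\Phi_t^0(x)))$. At every $t\in N^c$, part (1) delivers the Taylor-type expansion
\[
\Phi_t^0(x)+h\,b_t(\Phi_t^0(x)) = \Phi_{t+h}^0(x)+o(h).
\]
By Lemma \ref{lemma:gronwall} the inverse $(\Phi_{t+h}^0)^{-1}$ is Lipschitz with constant uniformly bounded for $h$ in a neighborhood of $0$, so applying it to both sides gives $(\Phi_{t+h}^0)^{-1}\big(\Phi_t^0(x)+hb_t(\Phi_t^0(x))\big)=x+o(h)$. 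Therefore
\[
\Psi^{-1}\!\big(\Psi(t,x)+h(1,b_t(\Phi_t^0(x)))\big) = (t+h,\,x+o(h)) = \Psi^{-1}(\Psi(t,x))+h(1,0)+o(h),
\]
which is precisely (2). One could equivalently invoke Lemma \ref{lemma:chain_rule}(2) applied to the identity $\Psi^{-1}\circ\Psi=\mathrm{Id}$, but the argument above avoids verifying differentiability of $\Psi^{-1}$ in an \emph{a priori} unknown direction.

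I expect the main obstacle to be the control of the cross-term $A_h$ in (1): the Lipschitz constants $\Lip(b_r;\R^d)$ are only $L^1$ in $r$, and the spatial displacement $|\Phi_r^0(x)-\Phi_t^0(x)|$ is controlled only by $\|b_s\|_{C^0(\R^d)}$, so matching the two behaviors forces us to choose $t$ \emph{uniformly} well with respect to $y$. This is exactly the content of Lemma \ref{lemma:uniform_Leb_points}, which provides a single null set $N$ outside of which $t$ is simultaneously a Lebesgue point of $r\mapsto b_r(y)$ for every $y\in\R^d$ and of $r\mapsto\Lip(b_r;\R^d)$; without this simultaneity the conclusion of (1) could only be asserted at an $x$-dependent set of times, which would be too weak for the subsequent applications of the lemma.
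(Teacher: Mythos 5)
Your proposal is correct and relies on exactly the same ingredients as the paper: the uniform Lebesgue--point set $N$ of Lemma~\ref{lemma:uniform_Leb_points}, the $C^0$-bound on spatial displacement and the uniform Lipschitz bound on $(\Phi_{t+h}^0)^{-1}$ from Lemma~\ref{lemma:gronwall}. The decomposition you use for (1) — one term killed by the Lebesgue point of $r\mapsto b_r(y)$ at the fixed spatial point $y=\Phi_t^0(x)$, the other controlled by $\fint\Lip(b_r)\,\dd r\cdot\int\|b_s\|_{C^0}\,\dd s$ — is structurally identical to the paper's estimate, except the paper packages it inside the proof of (2) (after invoking the semigroup identity to write $\Phi_{t+h}^0(x)=\Phi_{t+h}^t(y)$) and dismisses (1) with \virgolette{immediate, we omit the proof}.

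The one genuine improvement in your organization is that you prove (1) carefully first and then deduce (2) as a one-line consequence of (1) plus the Lipschitz bound on the inverse flow, whereas the paper re-derives the same difference-quotient estimate from scratch in its proof of (2). Deducing (2) from (1) is a bit cleaner and avoids the semigroup manipulation entirely.

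Two small remarks. First, your argument correctly yields (1) only for $t\in N^c$, i.e.\ for $\L^1$-a.e.\ $t$, rather than for \emph{every} $t\in(0,1)$ as stated in the lemma; in fact the \virgolette{every $t$} claim cannot hold in general when $b$ is merely $L^1$ in time, so your version is actually the defensible one (and the paper only ever uses (1) under a time integral, where a.e.\ suffices). Second, in the $A_h$-estimate it is worth observing explicitly that the bound $|\Phi_r^0(x)-\Phi_t^0(x)|\le\int_t^r\|b_s\|_{C^0}\,\dd s\le\int_t^{t+h}\|b_s\|_{C^0}\,\dd s$ holds uniformly for $r$ between $t$ and $t+h$, which is what lets you pull the displacement factor out of the average; you do this implicitly and it is correct.
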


\begin{proof}
    Point (1) is immediate and follows from the very definition of flow. We omit the proof. 

    For Point (2), we argue as follows. By definition we have 
    \begin{align*}
    D\Psi^{-1}(\Psi(t,x))[(1,b_t(\Phi_t^0(x)))] = & \lim_{h\to 0} \frac{\Psi^{-1}(\Psi(t,x)+h(1,b_t(\Phi_t^0(x))))-\Psi^{-1}(\Psi(t,x))}{h}\\
    =&\lim_{h \to 0}\frac{\Psi^{-1}\Big(\big(t+h,\Phi_t^0(x)+h b_t(\Phi_t^0(x))\big)-(t,x)\Big)}{h}\\
    =&\lim_{h\to 0} \frac{\big(t+h,(\Phi_{t+h}^0)^{-1}\big(\Phi_t^0(x)+h b_t(\Phi_t^0(x))\big)\big)-(t,x)}{h}\\
    =& \left( 1,\lim_{h \to 0}\frac{(\Phi_{t+h}^0)^{-1}\big(\Phi_t^0(x)+h b_t(\Phi_t^0(x))\big)-x}{h}\right).
    \end{align*}
   For the ease of notation, set $y:=\Phi_t^0(x)$. Then we need to show
    \[
    \lim_{h\to 0}\frac{(\Phi_{t+h}^0)^{-1}\big(y+h b_t(y)\big)-x}{h}=0, 
    \]
    or equivalently, using that $x=(\Phi_{t+h}^0)^{-1}(\Phi_{t+h}^0(x))$,  
    \begin{equation}\label{eq:goal}
    \lim_{h\to 0}\frac{(\Phi_{t+h}^0)^{-1}\big(y+h b_t(y)\big)-(\Phi_{t+h}^0)^{-1}(\Phi_{t+h}^0(x))}{h}=0.
    \end{equation}
    Observe that, by the semigroup-like identity \eqref{eq:semigroup-like} we deduce that for every $s,s' \in [0,1]$ it holds
    \[
    (\Phi_s^{s'})^{-1} = \Phi_{s'}^s.
    \]
    In particular, for $h >0$ sufficiently small, we have 
    \[
    (\Phi_{t+h}^0)^{-1} = \Phi_0^{t+h}
    \]
    which, combined with Lemma \ref{lemma:gronwall}(ii), conveys that also the map $(\Phi_{t+h}^0)^{-1}(\frarg)$ is $L$-Lipschitz, with $L:=\exp{\left(\int_0^1 \Lip(b_r)\, \dd r\right)}>0$. Therefore we have 
    \[
    |(\Phi_{t+h}^0)^{-1}\big(y+h b_t(y)\big)-(\Phi_{t+h}^0)^{-1}(\Phi_{t+h}^0(x))|\le L |y+h b_t(y)-\Phi_{t+h}^0(x)|.
    \]
    Thus, in order to show \eqref{eq:goal}, we just need to show that 
    \[
    \lim_{h\to0}\frac{|y+h b_t(y)-\Phi_{t+h}^0(x)|}{h}=0.
    \]
    Again by the semigroup law \eqref{eq:semigroup-like}, we can write $\Phi_{t+h}^0(x)=\Phi^t_{t+h}(\Phi^0_t(x))=\Phi^t_{t+h}(y)$. Therefore we have 
    \begin{align*}
    \frac{|y+h b_t(y)-\Phi_{t+h}^0(x)|}{h} & =
    \frac{|h b_t(y)+y-\Phi_{t+h}^t(y)|}{h}\\
    & \le \fint_t^{t+h}|b_t(y)-b_s(\Phi^t_s(y))|\,\dd s\\
    &\le \fint_t^{t+h}|b_t(y)-b_s(y)|\,\dd s+\fint_t^{t+h}|b_s(y)-b_s(\Phi^t_s(y))|\,\dd s\\
    &\le \fint_t^{t+h}|b_t(y)-b_s(y)|\,\dd s+\fint_t^{t+h} \Lip(b_s)|y-\Phi^t_s(y)|\,\dd s.
    \end{align*}
    We now invoke Lemma \ref{lemma:uniform_Leb_points}: we find a negligible set $N \subset \R$ such that for every $t \in N^c$ the first summand in the right hand side goes to zero. For the second summand, we write 
    \begin{align*}
    \fint_t^{t+h} \Lip(b_s)|y-\Phi^t_s(y)|\,\dd s & = \fint_t^{t+h} \Lip(b_s) | \Phi^t_t(y) -\Phi^t_s(y)|\,\dd s \\ 
    & \le \fint_t^{t+h} \Lip(b_s)\int_t^s |b_r(\Phi^t_r(y))|\, \dd r \,\dd s \\
    & \le \fint_t^{t+h} \Lip(b_s)\,\dd s \int_t^{t+h} \|b_r\|_{C^0(\R^d)} \, \dd r.
    \end{align*}
    The first factor is finite since $t \in N$ (again by Lemma \ref{lemma:uniform_Leb_points}) while the second converges to $0$ as $h \to 0$, since the function $r \mapsto \|b_r\|_{C^0(\R^d)}$ is integrable by assumption. This shows that \eqref{eq:goal} holds true, and concludes the proof.
\end{proof}

\section{The decomposability bundle for \texorpdfstring{$AC_t\Lip_x$}{AC Lip} functions}\label{sec:bundle}
%\section{The decomposability bundle for $AC_t\Lip_x$ functions}\label{sec:bundle}

In this section we are going to extend some results concerning the decomposability bundle, which was introduced in \cite{AM}. The goal is to pass from Lipschitz functions to functions that are Lipschitz in space, but only $AC$ in time. In the following we will consider only measures that admit a decomposition $\mu=\Lscr^1\otimes\mu_t$ (in some cases also with $|\mu_t|(\R^d)$ uniformly bounded), since these are the ones we are going to apply the theorem to. It seems that extending the following results to an arbitrary measure $\mu$ involves some subtleties that could be explored in a separate project.

\begin{definition}\label{def:AC_t_Lip_x}
We say that a function $f:\R\times\R^d\to \R$ belongs to the space $AC_t\Lip_x$ if there exist a constant $C>0$ and a function $g\in L^1(\R)$ such that
\[
    |f(t,x)-f(s,y)|\le C|x-y|+\int_s^t g(\tau)\,\dd\tau\qquad\text{for every $s,t\in\R$ and $x,y\in\R^d$.}
\]
\end{definition}
The smallest constant $C$ for which the inequality holds is denoted by $\Lip_x(f)$. Any function $g$ satisfying the inequality above will be referred to as an \emph{upper gradient} for $f$. Usually such a term is used in a slightly different way, but we use it to denote only the time component $g$.

\subsection{Differentiability along the bundle}
We start by proving that a function in $AC_t\Lip_x$ retains the same differentiability properties of Lipschitz functions, with respect to measures of the form $\mu=\Lscr^1\otimes \mu_t$.

\begin{theorem}\label{thm:differentiability}
    Let $\mu=\Lscr^1\otimes \mu_t$
    be a measure in $\R\times\R^d$, and let $f\in AC_t\Lip_x(\R\times\R^d)$. Then $f$ is differentiable along $V(\mu,(t,x))$ for $\mu$-a.e. $(t,x)$.
\end{theorem}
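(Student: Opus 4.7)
The plan is to reduce to the classical Alberti–Marchese theorem (Theorem \ref{thm:bundle_AM}) via a time-truncation argument in the spirit of Section \ref{sec:maximal}. Let $g \in L^1(\R)$ be an upper gradient for $f$ and, for $\lambda > 0$, set
\[
E_\lambda := \{t \in \R : Mg(t) \le \lambda\}, \qquad F_\lambda := E_\lambda \times \R^d.
\]
For $s, t \in E_\lambda$ with $s<t$, the interval $[s,t]$ contains the point $s$ with $Mg(s)\le \lambda$, so $\fint_s^t g \le \lambda$. Combined with the spatial Lipschitz bound, this gives that $f|_{F_\lambda}$ is Lipschitz with constant $\max(C,\lambda)$, and McShane's extension produces a globally Lipschitz $f_\lambda : \R \times \R^d \to \R$ agreeing with $f$ on $F_\lambda$. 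Theorem \ref{thm:bundle_AM} then ensures that $f_\lambda$ is differentiable along $V(\mu, (t,x))$ for $\mu$-a.e. $(t,x)$.

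The key step is to transfer differentiability from $f_\lambda$ to $f$ at $\mu$-a.e. point of $F_\lambda$. Since $\mu = \L^1 \otimes \mu_t$, at $\mu$-a.e. $(t_0, x_0) \in F_\lambda$ the coordinate $t_0$ is a Lebesgue density point of $E_\lambda$, which forces $\dist(t_0 + h v_t, E_\lambda) = o(h)$ as $h \to 0$ for every $v_t \in \R$. At such a point, let $L$ be the differential of $f_\lambda$ along $V := V(\mu, (t_0, x_0))$, and for $v = (v_t, v_x) \in V$ use $f(t_0, x_0) = f_\lambda(t_0, x_0)$ to write
\[
f((t_0, x_0) + hv) - f(t_0, x_0) = \bigl(f - f_\lambda\bigr)\bigl((t_0, x_0) + hv\bigr) + \bigl(f_\lambda((t_0, x_0) + hv) - f_\lambda(t_0, x_0)\bigr).
\]
The second bracket equals $hL(v) + o(h)$ by differentiability of $f_\lambda$. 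For the first bracket, if $t_0 + hv_t \in E_\lambda$ it vanishes; otherwise let $s$ be the nearest point of $E_\lambda$ to $t_0+hv_t$ and insert $f(s,x_0+hv_x) = f_\lambda(s,x_0+hv_x)$: the upper-gradient estimate for $f$ combined with $Mg(s) \le \lambda$, and the Lipschitz bound $L_\lambda$ for $f_\lambda$, yield
\[
|f - f_\lambda|\bigl((t_0, x_0) + hv\bigr) \le (\lambda + L_\lambda)\, \dist(t_0 + hv_t, E_\lambda) = o(h).
\]
Hence $f$ is differentiable along $V$ at $(t_0, x_0)$ with differential $L$.

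Finally, letting $\lambda \to \infty$ along a sequence, the weak $(1,1)$ bound \eqref{eq:weak1_maximal_function} gives $\L^1(E_\lambda^c) \to 0$, and the product structure together with local integrability of $t \mapsto |\mu_t|(\R^d)$ (implied by local finiteness of $\mu$) yields $\mu(F_\lambda^c) = \int_{E_\lambda^c} |\mu_t|(\R^d)\,\dd t \to 0$. Thus $\bigcup_\lambda F_\lambda$ has full $\mu$-measure, concluding the proof. The main obstacle is precisely the transfer of differentiability in the middle step: the naive bound for $|f - f_\lambda|$ in the $t$-direction is only $O(h)$, and upgrading it to $o(h)$ requires using \emph{both} the Lebesgue density of $E_\lambda$ at $t_0$ (to shrink $\dist(t_0 + hv_t, E_\lambda)$) and the maximal-function bound at the projection point $s$ (to tame the integral of $g$ over a non-standard interval by a constant times its length, rather than merely by an uncontrolled integral).
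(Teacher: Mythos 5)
Your proof is correct and follows essentially the same strategy as the paper's (Lemmas \ref{lemma:one-dimensional} and \ref{lemma:approximation_multi-dimensional} combined in the proof of Theorem \ref{thm:differentiability}): use the maximal function to cut out a set $E_\lambda$ of times on which $f\restriction_{E_\lambda\times\R^d}$ is Lipschitz, extend to a global Lipschitz function, apply Theorem \ref{thm:bundle_AM} to the extension, and transfer differentiability to $f$ at Lebesgue density points of $E_\lambda$ via the $O(\dist(t,E_\lambda))=o(h)$ bound on $f-f_\lambda$. The only genuine difference is that you invoke an abstract McShane extension, whereas the paper builds the extension $f_j$ explicitly by linearly interpolating $f(\cdot,x)$ in time across the gaps of $E_j$ --- a concrete choice that is inessential for this theorem but is reused in Lemma \ref{lemma:Lipschitz_approximation}, where the explicit structure is needed to obtain $L^1$ convergence of the derivatives along the bundle.
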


\begin{remark}
    It is not entirely clear to us how to extend the previous result to an arbitrary measure $\mu$. One might even wonder whether the differentiability along the bundle $V(\mu,\frarg)$ could be proved for functions that are $AC$ in all variables, namely such that
    \[
    |f(x_1,\ldots,x_n)-f(y_1,\ldots, y_n)|\le \sum_{i=1}^n \int_{[x_i,y_i]} g(t)\,\dd t
    \]
    for some $g\in L^1(\R)$. We leave this question for a future project.
\end{remark}

In order to show Theorem \ref{thm:differentiability}, we will need some preliminary lemmas. Given a closed set $E\subset \R$, we can write $E^c=\bigcup_{\ell\in\mathbb{N}} I_\ell$, where $I_\ell$ are at most countably many pairwise disjoint open intervals. If $f:\R\to\R$ is a continuous function, let us denote by $L(f,E) \colon \R \to \R$ the function that coincides with $f$ on $E$ and that interpolates linearly between the endpoint values on each $I_\ell$, namely
\begin{equation}\label{eq:def_L(f,E)}
    L(f,E)(t):=
    \begin{cases}
        f(t) & \text{if $t\in E$}\\
        \lambda f(r_\ell)+(1-\lambda)f(s_\ell) & \text{if $t=\lambda r_\ell+(1-\lambda)s_\ell\in I_\ell=(r_\ell,s_\ell)$}
    \end{cases}
\end{equation}

\begin{figure}
\centering
\begin{tikzpicture}[scale=.8]
  \begin{axis}[clip=false, axis lines = middle, xtick=\empty, ytick=\empty] 
 
        \addplot [domain=0:1, samples=100, blue] {x^2};

        \addplot [domain=1:2, samples=100, dashed, blue] {12*x^2 - 35*x + 24};

        \addplot [domain=2:2.5, samples=100, blue] {2-(x-2)^3};

        \addplot [domain=2.5:4, samples=100, dashed, blue] {
        -2/3*x^2 + 29/12*x};

        \addplot [domain=4:5, samples=100, blue] {-1+(x-4)^2};
        
        \addplot[thick, red] coordinates { (1,1) (2,2)};
    
        \addplot[thick, red] coordinates { (2.5, 1.875) (4,-1)};

        \addplot[thick] coordinates {(1, -0.1) (1, 0.1)};
        \addplot[thick] coordinates {(2, -0.1) (2, 0.1)};
        \addplot[thick] coordinates {(2.5, -0.1) (2.5, 0.1)};
        \addplot[thick] coordinates {(4, -0.1) (4, 0.1)};
  
    \end{axis}

        \node[] at (2,2) {$I_1$};
        \node[] at (4.5,2) {$I_2$};
\end{tikzpicture}
\caption{Visual representation of the function $L(f,E)$. On the open intervals $I_1$ and $I_2$ we replace the function $f$ (dashed graph) with the linear interpolation between endpoints.}\label{fig:interpolation}\end{figure}
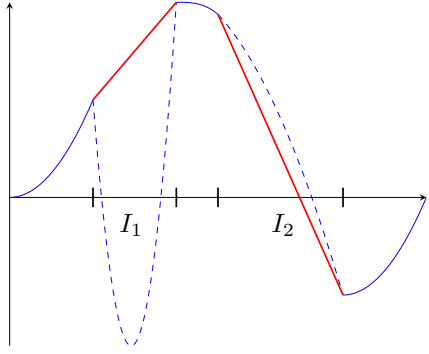

\begin{lemma}[Approximation of $AC$ functions]\label{lemma:one-dimensional}
    Let $g\in L^1(\R)$. Then there exists a sequence of closed sets $E_j\subset\R$, $j\in\mathbb{N}_+$, and a dimensional constant $C$ such that the following holds: if $f\in AC(\R)$ admits $g$ as upper gradient then:
    \begin{enumerate}
        \item $f|_{E_j}$ is $j$-Lipschitz;
        \item $\Lscr^1(E_j^c)\le C\|g\|_{L^1}/j$;
    \item Moreover, setting $f_j:=L(f,E_j)$, we additionally have:
    \begin{enumerate}
        \item $f_j\to f$ uniformly;
        \item $ f'_j(x)\to f'(x)$ for $\Lscr^1$-a.e. $x$;
        \item $f'_j\to f'$ in $L^1$.
    \end{enumerate}
    \end{enumerate}
\end{lemma}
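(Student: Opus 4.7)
The plan is to define the sets $E_j$ using the maximal function of the upper gradient $g$, so that the construction depends on $g$ alone and hence works uniformly for all admissible $f$. Specifically, I would set
\[
E_j := \{t \in \R : Mg(t) \le j\},
\]
where $Mg$ is the uncentered maximal function recalled in Subsection \ref{sec:maximal}. These sets are nested and closed, since $Mg$ is lower semicontinuous. Property (2) then follows immediately from the weak $(1,1)$ bound \eqref{eq:weak1_maximal_function}. Property (1) is the Lipschitz restriction property recalled at the end of Subsection \ref{sec:maximal}, applied to the a.e. derivative $f'$, which satisfies $|f'|\le g$ a.e.\ whenever $g$ is an upper gradient of $f\in AC(\R)$.

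For the properties of $f_j:=L(f,E_j)$, I would write $E_j^c=\bigsqcup_\ell I_\ell$ with $I_\ell=(r_\ell,s_\ell)$, and note that on each $I_\ell$ the slope of $f_j$ is $(f(s_\ell)-f(r_\ell))/(s_\ell-r_\ell)$, whose numerator is bounded in modulus by $\int_{I_\ell} g$ by the AC estimate. For uniform convergence (3a), both $|f_j(t)-f(r_\ell)|$ and $|f(t)-f(r_\ell)|$ are controlled by $\int_{I_\ell} g$ on $I_\ell$, so
\[
\|f_j-f\|_\infty \le 2\sup_\ell \int_{I_\ell} g \le 2\int_{E_j^c} g,
\]
and the right-hand side vanishes as $j\to\infty$ by absolute continuity of the integral, since $\L^1(E_j^c)\to 0$ by property (2). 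For the $L^1$ convergence (3c), on each $I_\ell$ one has $\int_{I_\ell}|f_j'-f'| \le |f(s_\ell)-f(r_\ell)|+\int_{I_\ell}|f'| \le 2\int_{I_\ell} g$, and summing gives $\|f_j'-f'\|_{L^1}\le 2\int_{E_j^c} g\to 0$.

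The most delicate point is (3b). Here I would argue that the topological boundary $\partial E_j$ is contained in the countable set $\{r_\ell,s_\ell\}_\ell$ of endpoints of the component intervals of $E_j^c$, so $\bigcup_j \partial E_j$ is countable and hence $\L^1$-negligible. For any $t$ outside this set at which $f$ is differentiable and $Mg(t)<\infty$ (which covers $\L^1$-a.e.\ $t$), there exists $j_0=j_0(t)$ such that $t\in E_j$ for every $j\ge j_0$; by the countability argument $t$ lies in the \emph{interior} of each such $E_j$, so $f_j\equiv f$ on a neighborhood of $t$ and therefore $f_j'(t)=f'(t)$ for all $j\ge j_0$. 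The main obstacle is precisely this interior-point issue: one must check that almost every point of $E_j$ is an interior point, which is what the countable boundary argument delivers; once this is in place, every piece of the statement reduces to standard estimates on the maximal function and on absolutely continuous integrals.
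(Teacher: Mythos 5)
Your construction is essentially the paper's: you define the good sets via superlevel sets of the maximal function of $g$ and use the weak $(1,1)$ bound for (2), the Lipschitz-restriction property for (1), and the $AC$ estimate $|f(s_\ell)-f(r_\ell)|\le\int_{I_\ell}g$ for (3a) and (3c). One small improvement over the paper's write-up: you correctly observe that $\{Mg\le j\}$ is already closed (the uncentered maximal function is lower semicontinuous), so the paper's inner-regularity step selecting a closed $E_j\subseteq F_j$ is unnecessary, and you get a nested sequence $E_1\subseteq E_2\subseteq\cdots$ for free. That nesting is actually what makes the ``for a.e.\ $x$ there is $j_0(x)$ with $x\in E_j$ for all $j\ge j_0$'' step clean.

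The gap is in your argument for (3b). The claim that the topological boundary $\partial E_j$ is contained in the countable set of endpoints $\{r_\ell,s_\ell\}_\ell$ is false in general. If the complementary intervals $I_\ell$ accumulate at a point $x\in E_j$ (for instance $E_j\cap(0,1)=\{0\}\cup\bigcup_n[1/(2n+1),1/(2n)]$, where $0$ is a boundary point that is not an endpoint of any $I_\ell$), then $x\in\partial E_j$ without being an endpoint. In fact $\partial E_j$ can even have positive Lebesgue measure: $E_j$ can be a nowhere dense closed set of positive measure, in which case $\mathring{E_j}=\emptyset$ and your ``interior point'' argument produces nothing. So you cannot conclude that a.e.\ point of $E_j$ eventually becomes interior.

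The fix is to use density points rather than interior points, which is what the paper's appeal to ``locality of the derivative'' amounts to. By the Lebesgue density theorem, $\Lscr^1$-a.e.\ $x\in E_j$ is a point of density $1$ of $E_j$. If $x$ is such a point and both $f$ and $f_j$ are differentiable at $x$ (which holds for a.e.\ $x$), then, since $f-f_j$ vanishes on $E_j$ and $E_j$ has points $x+t_n$ with $t_n\to0$, $t_n\neq0$, the difference quotient $(f-f_j)(x+t_n)/t_n=0$ forces $(f-f_j)'(x)=0$, i.e.\ $f_j'(x)=f'(x)$. Taking the union over $j$ of the exceptional null sets and using that the $E_j$ are nested with $\Lscr^1(E_j^c)\to0$ then yields (3b). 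Everything else in your proposal is correct.
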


\begin{proof}
    Recall from Section \ref{sec:maximal} the definition and properties of the maximal function $Mg$. For $j\in\mathbb{N}$ let $F_j:=\{x\in\R:\, Mg(x)\le j\}$. Then, for some dimensional constant $C$, $f|_{F_j}$ is $j$-Lipschitz and $\Lscr^1(F_j^c)< C\|g\|_{L^1}/j$. By inner regularity of the Lebesgue measure, for each $j$ we can select a closed subset $E_j\subseteq F_j$ satisfying the same properties, so that (1) and (2) are satisfied.
    The convergence stated in (a) follows from the definition of $f_j$ and the absolute continuity.
    Writing $E_j^c=\bigcup_\ell I_\ell$ we have that $f_j'(x)=f'(x)$ for $\Lscr^1$-a.e. $x\in E_j$ by locality of the derivative. By Point (2) we deduce (b). We are left to prove (c). For every $I_\ell=(r_\ell,s_\ell)$ we have
    \[
    \int_{I_\ell} |f_j'(x)|\,\dd x=\left|\int_{I_\ell} f_j'(x)\,\dd x\right|=|f(s_\ell)-f(r_\ell)|=\left|\int_{I_\ell} f'(x)\,\dd x\right|\le  \int_{I_\ell} |f'(x)|\,\dd x
    \]
    It follows that 
    \[
    \int_{E_j^c}|f_j'(x)|\,\dd x\le \int_{E_j^c} |f'(x)|\,\dd x
    \]
    which goes to zero since $f'\in L^1(\R)$ and, by Point (2), $\Lscr^1(E_j^c)\to 0$. This implies (c) and concludes the proof.
\end{proof}

Now we turn to a more general version of the previous lemma, that holds in any dimension.

\begin{lemma}[Approximation of $AC_t\Lip_x$ functions]\label{lemma:approximation_multi-dimensional}
    Let $f\in AC_t \Lip_x(\R\times\R^d)$. Then there exists a sequence of closed sets $E_j\subset \R$ and Lipschitz functions $f_j$ such that:
    \begin{enumerate}
        \item $f|_{E_j\times\R^d}$ is Lipschitz and $\Lip_x(f_j)\le \Lip_x(f)$;
        \item $f_j=f$ on $E_j\times\R^d$;
        \item $f_j\to f$ uniformly;
        \item $f-f_j$ is differentiable, with differential zero, at all points $(t,x)$ such that $t$ is a density point of $E_j$ (in particular, for $\Lscr^1$-a.e. $t\in E_j$).
    \end{enumerate}
\end{lemma}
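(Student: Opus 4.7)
The plan is to apply Lemma~\ref{lemma:one-dimensional} to an upper gradient $g \in L^1(\R)$ of $f$ (in the sense of Definition~\ref{def:AC_t_Lip_x}), with the minor refinement of additionally requiring the closed sets $E_j \subset \R$ to lie inside the set of Lebesgue points of $g$. Since Lebesgue points have full measure, this refinement does not affect the conclusions of Lemma~\ref{lemma:one-dimensional}. I would then define the approximants by interpolating in $t$ with $x$ frozen:
\[
f_j(t, x) := L(f(\cdot, x), E_j)(t),
\]
with $L$ as in \eqref{eq:def_L(f,E)}.

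Items (1)--(3) should follow by routine bookkeeping. For $\Lip_x(f_j) \le \Lip_x(f)$: on every component $I_\ell = (r_\ell, s_\ell)$ of $E_j^c$, $f_j(t, \cdot)$ is a convex combination of $f(r_\ell, \cdot)$ and $f(s_\ell, \cdot)$ and hence $\Lip_x(f)$-Lipschitz, while on $E_j$ it coincides with $f$. For the Lipschitzness of $f|_{E_j \times \R^d}$: given $t_1, t_2 \in E_j$, the bound $Mg(t_1) \le j$ coming from the construction of Lemma~\ref{lemma:one-dimensional} yields $\int_{t_1}^{t_2} g \le j|t_2 - t_1|$, which combined with $\Lip_x(f)$ gives a global Lipschitz constant $\max(j, \Lip_x(f))$ for $f|_{E_j\times\R^d}$, and also shows that $f_j$ itself is Lipschitz on $\R\times\R^d$. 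Item (2) is immediate from the definition of $L$. For (3), on every gap $I_\ell$ one has $|f(t, x) - f_j(t, x)| \le \int_{I_\ell} g \le \int_{E_j^c} g$, and the last quantity tends to $0$ by absolute continuity of the Lebesgue integral in view of $\Lscr^1(E_j^c) \to 0$.

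The main obstacle will be (4). Fix $(t_0, x_0)$ with $t_0$ a density point of $E_j$, so in particular $(f - f_j)(t_0, x_0) = 0$; since $f - f_j \equiv 0$ on $E_j \times \R^d$, it suffices to show that $|(f - f_j)(t, x)|/(|t - t_0| + |x - x_0|) \to 0$ as $t \to t_0$ with $t$ running through gaps $I_\ell = (r_\ell, s_\ell)$ approaching $t_0$. A preliminary geometric observation is that a density point of $E_j$ cannot be an endpoint of any $I_\ell$; more quantitatively, assuming without loss of generality $t_0 < r_\ell < s_\ell$ and writing $r := s_\ell - t_0$, the inclusion $I_\ell \subset B(t_0, r) \cap E_j^c$ together with density of $E_j$ at $t_0$ forces $|I_\ell| \le 2\eps\, r$ for $r$ small, which by an elementary algebraic manipulation yields $s_\ell - t_0 \le C(r_\ell - t_0) \le C|t - t_0|$.

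Combining this with the Lebesgue point property of $g$ at $t_0$ (ensured by our refined choice of $E_j$), one estimates
\[
\int_{I_\ell} g \le \int_{B(t_0, r) \cap E_j^c} g \le \int_{B(t_0, r)} |g(s) - g(t_0)|\,\dd s + g(t_0)\,|B(t_0, r) \cap E_j^c|,
\]
where both summands on the right are $o(r) = o(|t - t_0|)$, the first by the Lebesgue point property and the second by density. Since the bound $|(f - f_j)(t, x)| \le \int_{I_\ell} g$ is uniform in $x$, this delivers the differentiability of $f - f_j$ at $(t_0, x_0)$ with zero differential, completing (4).
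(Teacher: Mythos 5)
Your approach for items (1)--(3) coincides with the paper's: apply Lemma~\ref{lemma:one-dimensional} fibrewise in $t$ and set $f_j(t,x)=L(f(\cdot,x),E_j)(t)$, with the same bookkeeping for the two Lipschitz constants and the bound $|f-f_j|\le 2\int_{I_\ell}g$ on gaps. (Note the factor $2$: since $f_j$ on a gap interpolates the two endpoint values, the correct estimate is $|(f-f_j)(t,x)|\le 2\int_{I_\ell}g$, not $\int_{I_\ell}g$; this is harmless for everything you do.)

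For item (4) you take a genuinely different route. The paper never needs the $E_j$ to consist of Lebesgue points of $g$. Instead it exploits the quantitative Lipschitz information already carried by $E_j$: since $Mg\le j$ on $E_j$, both $t\mapsto f(t,x)$ and $t\mapsto f_j(t,x)$ are $j$-Lipschitz once one foot is in $E_j$, and this gives the uniform-in-$x$ estimate $|(f-f_j)(t,x)|\le 2j\,\dist(t,E_j)$. Then the density-point hypothesis immediately turns $\dist(t,E_j)$ into $o(|t-t_0|)$, so the differential vanishes at $(t_0,x_0)$ with no further analysis. Your argument instead refines the $E_j$ to lie inside the Lebesgue points of $g$, bounds the error by $\int_{I_\ell}g$, includes $I_\ell$ in a ball $B(t_0,r)$ whose radius is comparable to $|t-t_0|$ (using the density hypothesis), and then estimates $\int_{B(t_0,r)\cap E_j^c}g=o(r)$ via the Lebesgue-point property of $g$ at $t_0$ together with $|B(t_0,r)\cap E_j^c|=o(r)$. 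This is correct, but it costs you an additional refinement of the $E_j$'s and a more delicate two-term splitting, whereas the paper's argument is a one-liner once $f$ and $f_j$ are seen to be $j$-Lipschitz near $E_j$. Both deliver the statement; the paper's is shorter and does not touch the pointwise Lebesgue structure of $g$, while yours makes the mechanism (low density of $E_j^c$ implies low mass of $g$ there) more explicit.
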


\begin{proof}
    For every $x\in \R^d$, we write $f_x(t):=f(t,x)$. Applying Lemma \ref{lemma:one-dimensional}, and observing that $g$ is an upper gradient for $f_x$ for every $x$, we can find closed sets $E_j$ with $\Lscr^1(E_j^c)\le C\|g\|_{L^1}/j$ and with $f_x|_{E_j}$ $j$-Lipschitz. We then linearly extend this function separately on each line: recalling \eqref{eq:def_L(f,E)} we set
    \[
    f_j(t,x)=L(f_x,E_j)(t).
    \]
    We shall show that the sequence $f_j$ satisfies the required conditions (1)-(4).

    (1) For fixed $j$, $f_j(\cdot,x)$ is uniformly Lipschitz in $x\in \R^d$ by construction. Moreover $f_j(t,\cdot)$ is Lipschitz for every $t\in E_j$, because in this case $f_j(t,\cdot)=f(t,\cdot)$ and $f$ is Lipschitz in the space variable by assumption. Suppose instead that $t\in E_j^c$ and let $\ell \in \N$ be such that $t\in I_\ell=(r_\ell,s_\ell)$. Then for every $x,x'\in\R^d$
    \[
    |f(t,x)-f(t,x')|\le\max\{|f(r_\ell,x)-f(r_\ell,x')|,|f(s_\ell,x)-f(s_\ell,x')|\}\le \Lip_x(f)|x-x'|.
    \]
    It follows that $f_j$ is Lipschitz, and that $\Lip_x(f_j)\le \Lip_x(f)$.
    
    (2) This holds by construction. 

    (3) If $t\in E_j$ then $f_j(t,x)=f(t,x)$ for every $x$. If instead $t\in I_\ell=(r_\ell,s_\ell)\subseteq E_j^c$, then
    \[
    |f_j(t,x)-f(t,x)|\le 2\int_{I_\ell} g(s)\,\dd s
    \]
    which goes to zero as $j\to\infty$, because $|I_\ell|\le |E_j^c|\to 0$.

    (4) By construction of $E_j$, for all $t\in E_j$ we have $Mg(t)\le j$. From this and the definition of $AC_t\Lip_x$ it follows that
    \begin{equation}\label{eq:lip_estimate_maximal}
    |f(t,x)-f(t',x)|\le \int_{[t,t']} g(s)\,\dd s \le j|t-t'|.
    \end{equation}
    Since $f_j$ is $j$-Lipschitz, it follows that the following estimate holds for $f-f_j$:
    \begin{align}
    \begin{aligned}\label{eq:f-f_j}
    |(f-f_j)(t,x)-(f-f_j)(t',x)|&\le |f(t,x)-f(t',x)|+|f_j(t,x)-f_j(t',x)|\\
    & \le 2j|t-t'|.
    \end{aligned}
    \end{align}
    Moreover, the function $f-f_j$ is zero on $E_j\times\R^d$ by construction. From \eqref{eq:f-f_j} it then follows that
    \[
    |(f-f_j)(t,x)|\le 2j\dist(t,E_j).
    \]
    If $t$ is a density point of $E_j$ this yields that $f-f_j$ has differential zero at $(t,x)$.
\end{proof}

\begin{proof}[Proof of Theorem \ref{thm:differentiability}]
    We consider the functions $f_j$ built in Lemma \ref{lemma:approximation_multi-dimensional}. We write $f=(f-f_j)+f_j$ and observe that:
    \begin{itemize}
        \item $f_j$ is Lipschitz and thus differentiable in direction $V(\mu,(t,x))$ at $\mu$-a.e. $(t,x)$ by Theorem \ref{thm:bundle_AM};
        \item $(f-f_j)(t,x)$ has differential zero for $\Lscr^1$-a.e. $t\in E_j$; in particular $f(t,x)$ is also differentiable in direction $V(\mu,(t,x))$ for $\Lscr^1$-a.e. $t\in E_j$.
    \end{itemize}
     Since $\Lscr^1(E_j^c)\to0$, we deduce that $f(t,x)$ is differentiable in direction $V(\mu,(t,x))$ for $\Lscr^1$-a.e. $t\in \R$. In particular, the conclusion holds at $\Lscr^1\otimes\mu_t$-almost all points.   
     As the measure $\mu$ is of the form $\Lscr^1\otimes \mu_t$, it follows that $f$ is differentiable in direction $V(\mu,x)$ at $\mu$-a.e. $(t,x)$, concluding the proof.
\end{proof}

\subsection{Approximation with Lipschitz functions and pushforward formula}

Our next goal is to derive a pushforward formula for $AC_t\Lip_x$ functions. This task is based on an approximation result with Lipschitz functions, for which we know that the pushforward formula \eqref{eq:pushforward_lipschitz} holds. The reader might feel that the following proof is very similar to that of Lemma \ref{lemma:approximation_multi-dimensional}. Although this is partially true, we first needed to establish the differentiability of $f$ along the bundle stated in Theorem \ref{thm:differentiability}, so that the various terms below involving $D_Vf$ are well defined. 

Compare also the following lemma with \cite[Corollary~8.3]{AM}. Observe that the approximating sequence $f_j$ is universal, namely it works for every measure $\mu$ of the form considered below.

\begin{lemma}[Approximation with Lipschitz functions]\label{lemma:Lipschitz_approximation}
    Let $f\in AC_t \Lip_x(\R\times\R^d)$. Then there exists a sequence $f_j$ of Lipschitz functions such that the following holds.
    
    Let $T=\tau\mu$ be a normal current satisfying:
    \begin{enumerate}[(i)]
        \item $\mu=\Lscr^1\otimes\mu_t$ with
        \[
        m:=\sup_t |\mu_t|(\R^d)<\infty
        \]
        \item $\tau=(1,\tilde{b}_t)\wedge \tau_t$ for some unit horizontal $\tau_t$ and for some vector field $\tilde{b}$.
    \end{enumerate}
    Then, denoting by $V:=V(\mu,x)$ the decomposability bundle of $\mu$, we have:
    \begin{enumerate}
        \item $f_j\to f$ uniformly;
        \item $d_V f_j\to d_V f$ for $\mu$-a.e. $x$;
        \item $d_V f_j\to d_V f$ in $L^1(\mu)$;
        \item $\Wedge^k d_V f_j[\tau] \to \Wedge^k d_V f[\tau]$ in $L^1(\mu)$.
    \end{enumerate}
\end{lemma}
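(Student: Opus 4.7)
The plan is to use the sequence $f_j$ already constructed in Lemma \ref{lemma:approximation_multi-dimensional}: it is built from $f$ alone, hence is \emph{universal} (as required), and is globally Lipschitz on $\R\times\R^d$. Item (1) is inherited immediately from Lemma \ref{lemma:approximation_multi-dimensional}(3). The heart of the argument is to identify, pointwise, a large set on which $d_V f_j$ and $d_V f$ already agree, and then to estimate the two integrands on the residual set.

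For (2), Lemma \ref{lemma:approximation_multi-dimensional}(4) says that $f-f_j$ has zero differential at every $(t,x)$ for which $t$ is a density point of $E_j$. Since $d_V f$ exists $\mu$-a.e.\ by Theorem \ref{thm:differentiability} and $d_V f_j$ exists $\mu$-a.e.\ by Theorem \ref{thm:bundle_AM} (applied to the Lipschitz $f_j$), additivity of the directional derivative gives $d_V f(t,x) = d_V f_j(t,x)$ at all such points simultaneously over the countable family $\{f_j\}$. After extracting a subsequence with $\sum_j \L^1(E_j^c) < \infty$, a Borel--Cantelli argument shows that $\L^1$-a.e.\ $t \in \R$ is a density point of $E_j$ for all sufficiently large $j$. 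Combined with the product structure $\mu = \L^1 \otimes \mu_t$, this yields (2) at $\mu$-a.e.\ $(t,x)$.

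For (3) and (4) the strategy is the same: writing $\tilde E_j$ for the set of density points of $E_j$, the two differentials coincide on $\tilde E_j \times \R^d$, so the $L^1(\mu)$-differences are supported on $\tilde E_j^c \times \R^d$, whose $\mu$-measure is at most $m\,\L^1(E_j^c)\to 0$. The contributions of $d_V f$ and $\Wedge^k d_V f[\tau]$ vanish by absolute continuity of the integral, using $|d_V f| \le g(t)+\Lip_x(f)\in L^1(\mu)$ and $|\tilde b_t|\in L^1(\mu)$ (the latter being a consequence of the normality of $T$). For the $f_j$ contributions, the spatial directions are uniformly bounded by $\Lip_x(f_j)\le\Lip_x(f)$, while the time direction is handled through the explicit interpolation \eqref{eq:def_L(f,E)}: on every maximal interval $I_\ell\subset E_j^c$ one has $|\partial_t f_j(t,x)|\le \fint_{I_\ell} g$ uniformly in $x$, and hence
\[
\int_{E_j^c\times\R^d} |\partial_t f_j|\,\dd\mu \,\le\, m\sum_\ell\int_{I_\ell} g \,=\, m\int_{E_j^c} g \,\to\, 0.
\]
For (4) one additionally splits $\Wedge^k d_V f_j[\tau] = d_V f_j[(1,\tilde b_t)]\wedge \Wedge^{k-1} d_V f_j[\tau_t]$: the last factor has mass bounded by $\Lip_x(f)^{k-1}$, while the first is controlled by $|\partial_t f_j|+\Lip_x(f)\,|\tilde b_t|$, whose two summands vanish in $L^1(\tilde E_j^c\times\R^d)$ by the estimate just displayed and by absolute continuity of $\int |\tilde b_t|\,\dd\mu$, respectively.

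The main obstacle is precisely the non-uniform bound on $\partial_t f_j$: because $f_j$ is only $O(j)$-Lipschitz in time, any naive dominated-convergence argument fails. What rescues the proof is the very explicit structure of the approximants of Lemma \ref{lemma:approximation_multi-dimensional}, which lets us replace the useless bound $|\partial_t f_j|\le j$ by the local average $\fint_{I_\ell} g$ and then exploit $g\in L^1(\R)$ together with $\L^1(E_j^c)\to 0$, independently of the exploding time-Lipschitz constant of $f_j$.
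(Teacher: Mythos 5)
Your proposal is correct and follows essentially the same route as the paper: take the $f_j$ from Lemma~\ref{lemma:approximation_multi-dimensional}, use $d_Vf_j=d_Vf$ away from $E_j^c\times\R^d$ for (2), and on each bad interval $I_\ell\subset E_j^c$ replace the useless $j$-Lipschitz bound in time by the difference quotient $|f(s_\ell,x)-f(r_\ell,x)|/|I_\ell|\le\fint_{I_\ell}g$, which — summed against $\sup_t|\mu_t|$ and combined with $\Lip_x(f_j)\le\Lip_x(f)$ — gives (3) and (4) after using $\int|\tilde b_t|\,\dd\mu<\infty$ from the normality of $T$. The one small deviation is your explicit subsequence extraction plus Borel--Cantelli for the pointwise convergence in (2): the paper simply asserts (2) ``follows from construction,'' implicitly relying on the $E_j$ being (chosen) nested so that a.e.\ $t$ eventually lies in $E_j$; your version is a clean alternative way to close that minor gap, at the cost of only producing a subsequence, which is harmless since the lemma only asks for the existence of some approximating sequence.
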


Observe that by Theorem \ref{thm:differentiability} the quantity $d_V f$ (and thus $\Wedge^k d_V f$) is well-defined at $\mu$-almost every point.
Observe also that the vector field $\tilde{b}$ automatically satisfies
\begin{equation}\label{eq:b_integrability}
\int_0^1 \int_{\R^d} |\tilde{b_t}|\,\dd\mu_t \,\dd t<\infty,
\end{equation}
because by assumption $T=\tau\mu$ has finite mass. In the proof of the uniqueness result (Theorem \ref{thm:uniqueness}) we will use this lemma with $\tilde{b}=b$, but we decided to keep a separate notation for this lemma and for the following proposition, since the only assumption on $\tilde b$ is its integrability given by \eqref{eq:b_integrability} (while we always assume that $b$ satisfies the more restrictive condition \eqref{eq:Lip_assumption}).

\begin{proof}
    We consider the sequence $f_j$ built in Lemma \ref{lemma:approximation_multi-dimensional}. Then Point (1) follows immediately. Point (2) also follows from the construction, since $d_V f_j=d_V f$ for $\mu$-a.e. point in $E_j\times\R^d$ by Lemma \ref{lemma:approximation_multi-dimensional}(4). Let us show (3) and (4).
    
    (3) Let us write $E_j^c=\bigcup_\ell I_\ell$, and let us fix a point $(t,x)$, with $t\in I_\ell=(r_\ell,s_\ell)$, where $f_j$ is differentiable along $V(\mu,x)$. We claim that
    \[
    |d_V f_j(t,x)|\le \frac{|f(s_\ell,x)-f(r_\ell,x)|}{|s_\ell-r_\ell|}+\Lip_x(f).
    \]
    Indeed, given any unit vector $(e_t,e_x)\in\R\times\R^d$, we have the following estimate for the directional derivative of $f$ in direction $(e_t,e_x)$:
    \begin{align}
    \begin{aligned}\label{eq:incremental_quotient}
        \frac{1}{h}\big(f_j((t,x)+h(e_t,e_x))-f_j(t,x)\big)& =\frac{1}{h}\big(f_j((t,x)+h(e_t,e_x))-f_j((t,x)+h(e_t,0)\big)\\
        &\qquad+ \frac{1}{h}\big(f_j((t,x)+h(e_t,0))-f_j(t,x)\big)\\
        &\le \frac{1}{h}\Lip_x(f_j)|h e_x|+|e_t|\frac{|f(s_\ell)-f(r_\ell)|}{|s_\ell-r_\ell|}\\
        &\le  \Lip_x(f)|e_x|+|e_t|\frac{|f(s_\ell)-f(r_\ell)|}{|s_\ell-r_\ell|}.
    \end{aligned}
    \end{align}
    It immediately follows that 
    \[
    |d_Vf_j(t,x)|\le \Lip_x(f)+\frac{|f(s_\ell)-f(r_\ell)|}{|s_\ell-r_\ell|},
    \]
    and thus also that
    \begin{align*}
    \int_{E_j^c\times\R^d}|d_V f_j| \dd\mu& =\int_{E_j^c} \int_{\R^d}|d_V f_j| \dd\mu_t\,\dd t\\
    &\le m\sum_\ell \int_{I_\ell} \left(\frac{|f(s_\ell)-f(r_\ell)|}{|s_\ell-r_\ell|}+\Lip_x(f)\right)\,\dd t\\
    &\le m \sum_\ell |f(s_\ell)-f(r_\ell)|+m|E_j^c|\Lip_x(f)\\
    &\le m\int_{E_j^c} g(r)\,\dd r +m|E_j^c|\Lip_x(f).
    \end{align*}
    The quantity in the last line goes to zero because $|E_j^c|\to 0$. This proves (3).

    (4) 
    From the second to last line in \eqref{eq:incremental_quotient}, and recalling that by Lemma \ref{lemma:approximation_multi-dimensional}(1) $\Lip_x(f_j)\le \Lip_x(f)$, one obtains that for $t\in I_\ell\subseteq  E_j^c$
    \[
    df_j [(1,\tilde{b}_t)]\le \Lip_x(f)|\tilde{b}_t|+\frac{|f(s_\ell)-f(r_\ell)|}{|s_\ell-r_\ell|}
    \]
    It follows that
    \[
    |\Wedge^k d_V f_j[\tau]|\le \Lip_x(f)^{k-1}\left( \Lip_x(f)|\tilde{b}_t|+\frac{|f(s_\ell)-f(r_\ell)|}{|s_\ell-r_\ell|}\right).
    \]
    For the second term, one can argue as in Point (3). Instead the first term reduces to
    \[
    \int_{E_j^c\times\R^d} \Lip_x(f)^k |\tilde{b}_t| \,\dd \mu=\Lip_x(f)^k \int_{E_j^c} \int_{\R^d} |\tilde{b}_t| \,\dd\mu_t \,\dd t
    \]
    which again goes to zero as $j\to\infty$. To see this, we just observe that the map
    \[
    t\mapsto\int_{\R^d} |\tilde{b}_t|\,\dd\mu_t
    \]
    is integrable on $[0,1]$ since
    \[
    \int_0^1\int_{\R^d} |\tilde{b}_t|\,\dd\mu_t \,\dd t \le \int_0^1 \int_{\R^d} |(1,\tilde{b}_t)|\,\dd\mu_t\,\dd t=\int_{[0,1]\times\R^d} |(1,\tilde{b})|\,\dd\mu=\Mbf(T)<\infty.\qedhere
    \]
\end{proof}

\begin{proposition}[Pushforward formula for $AC_t\Lip_x$ maps]\label{prop:pushforward}
    Let $f\in AC_t \Lip_x(\R\times\R^d)$ be a proper map. Let $T=\tau\mu$ be a normal current satisfying:
    \begin{enumerate}[(i)]
        \item $\mu=\Lscr^1\otimes\mu_t$ with
        \begin{equation}\label{eq:assumption_bounded_mass}
        m:=\sup_t |\mu_t|(\R^d)<\infty;
        \end{equation}
        \item $\tau=(1,\tilde{b}_t)\wedge \tau_t$ for some unit horizontal $\tau_t$ and for some vector field $\tilde{b}$.
    \end{enumerate}    
    Let $f_j$ be a sequence of Lipschitz functions satisfying the conditions (1)-(4) in Lemma \ref{lemma:Lipschitz_approximation}. Then:
    \begin{enumerate}
        \item The limit
        \[
        f_* T:=\lim_{j\to\infty} (f_j)_* T
        \]
        exists in the strong topology, and defines a finite mass current. Moreover, for any other sequence $f_j$ satisfying (1)-(4) of Lemma \ref{lemma:Lipschitz_approximation} the limit is the same.
        \item $f_*T$ satisfies the pushforward formula
        \[
        \langle f_*T,\omega\rangle=\int \langle \omega(f(z)),\Wedge^k d_T f[\tau(z)] \rangle\,\dd\mu(z)\qquad\text{for every $\omega\in \Dscr^k(\R^d)$.}
        \]
        \item If $\partial T=\tau'\mu'$, with $\mu'=\Lscr^1\otimes \mu_t'$ and $\tau'=(1,\tilde {b}_t')\wedge\tau_t'$ satisfies $(i)$ and $(ii)$ 
        then $f_*T$ is normal and $\partial (f_*T)=f_*(\partial T)$.
    \end{enumerate} 
\end{proposition}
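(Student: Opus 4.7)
The strategy is to reduce the proposition to the Lipschitz case, where the pushforward formula already holds \eqref{eq:pushforward_lipschitz}, and then to pass to the limit along the approximating sequence $f_j$ furnished by Lemma~\ref{lemma:Lipschitz_approximation}. For each $f_j$, since it is Lipschitz, I would apply the Lipschitz pushforward formula to obtain
\[
\langle (f_j)_* T, \omega \rangle
= \int \langle \omega(f_j(z)),\, \Wedge^k d_T f_j[\tau(z)]\rangle \,\dd\mu(z)
\qquad \text{for every } \omega \in \Dscr^k(\R^d),
\]
noting that here one can use the representation $T=\tau\mu$ in place of the polar one, since $\mu$ and $\|T\|$ are mutually absolutely continuous and the formula is invariant under rescaling $(\tau,\mu)\mapsto(\lambda^{-1}\tau,\lambda\mu)$; equivalently, the decomposability bundles $V(\mu,\cdot)$ and $V(\|T\|,\cdot)$ agree $\mu$-a.e.

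For the existence of the limit and the formula in (1)--(2), I would fix $\omega \in \Dscr^k(\R^d)$ and write the telescopic splitting
\[
\langle (f_j)_*T - (f_i)_*T, \omega\rangle
= \int \langle \omega(f_j)-\omega(f_i),\, \Wedge^k d_T f_j[\tau]\rangle\,\dd\mu
+ \int \langle \omega(f_i),\, \Wedge^k d_T f_j[\tau] - \Wedge^k d_T f_i[\tau]\rangle\,\dd\mu.
\]
The second term is bounded by $\|\omega\|_\infty \| \Wedge^k d_T f_j[\tau] - \Wedge^k d_T f_i[\tau]\|_{L^1(\mu)}$, which tends to $0$ by Lemma~\ref{lemma:Lipschitz_approximation}(4); the first is controlled using the Lipschitzianity of $\omega$, the uniform convergence $f_j \to f$ from Lemma~\ref{lemma:Lipschitz_approximation}(1), and the uniform $L^1(\mu)$ bound on $\Wedge^k d_T f_j[\tau]$ that the same lemma yields. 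This simultaneously shows that $(f_j)_*T$ is Cauchy on test forms (so that $f_*T$ is well-defined) and, upon applying the splitting with $f_i$ replaced by $f$ itself (using Theorem~\ref{thm:differentiability} to give meaning to $\Wedge^k d_T f[\tau]$), that the limit coincides with the integral in (2). The uniform bound $\Mbf((f_j)_*T) \le \|\Wedge^k d_T f_j[\tau]\|_{L^1(\mu)}$ passes to the limit, hence $f_*T$ has finite mass, and the same Cauchy argument applied to another admissible sequence $g_j$ (via interleaving $f_1,g_1,f_2,g_2,\dots$, which again satisfies the conditions of Lemma~\ref{lemma:Lipschitz_approximation}) proves independence of the approximation.

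For point (3), I would exploit that for Lipschitz maps the pushforward commutes with the boundary, so $\partial((f_j)_*T) = (f_j)_*(\partial T)$ for every $j$. Since by hypothesis $\partial T$ satisfies the same structural assumptions $(i)$ and $(ii)$ as $T$, the argument of (1)--(2) applies verbatim to $\partial T$ with the \emph{same} sequence $f_j$ (this is why the proposition is stated with a universal approximation) and yields $(f_j)_*(\partial T) \to f_*(\partial T)$. Taking the distributional limit on both sides of the identity above, continuity of $\partial$ gives $\partial(f_*T) = f_*(\partial T)$, and since $f_*(\partial T)$ has finite mass by (1), the current $f_*T$ is normal. The main obstacle in carrying out this plan is ensuring, in the passage to the limit, that the various approximations behave uniformly in the relevant $L^1$ sense; this is exactly the content that has been already packaged in Lemma~\ref{lemma:Lipschitz_approximation}, so at this stage the argument reduces to careful bookkeeping.
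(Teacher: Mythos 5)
Your plan matches the paper's own proof: apply the Lipschitz pushforward formula \eqref{eq:pushforward_lipschitz} to each $f_j$, use the uniform convergence $\omega\circ f_j\to\omega\circ f$ and the $L^1(\mu)$ convergence $\Wedge^k d_T f_j[\tau]\to\Wedge^k d_T f[\tau]$ from Lemma~\ref{lemma:Lipschitz_approximation}(4) to pass to the limit, and apply the same scheme to $\partial T$ together with commutation of boundary and Lipschitz pushforward for point (3). The only cosmetic difference is that you establish Cauchyness and sequence-independence via telescoping and interleaving, whereas the paper concludes both at once by noting the limit is identified as the fixed integral expression in (2).
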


\begin{proof}
    Given that $f_j$ are Lipschitz we can apply the pushforward formula \eqref{eq:pushforward_lipschitz} to obtain
    \begin{align}\label{eq:pushforward_f_j}
    \langle  (f_j)_*T,\omega\rangle= \int \langle \omega(f_j(z)),\Wedge^k d_T f_j [\tau(z)] \rangle\,\dd\mu(z)\qquad\text{for every $\omega\in \Dscr^k(\R^d)$.}
    \end{align}
    By Condition (4) in Lemma \ref{lemma:Lipschitz_approximation} we know that $\Wedge^k d_Tf_j[\tau]\to \Wedge^kd_Tf[\tau]$ in $L^1(\mu)$. Moreover $f_j\to f$ uniformly, and thus also $\omega \circ  f_j\to \omega\circ f$ uniformly. From this it follows that we can pass to the limit in \eqref{eq:pushforward_f_j} and obtain
    \[
    \lim_{j\to\infty}\langle (f_j)_*T,\omega\rangle=\int\langle \omega(f(z)),\Wedge^k d_T f[\tau(z)]\rangle\,\dd\mu(z)\qquad\text{for every $\omega\in \Dscr^k(\R^d)$.}
    \]
    This proves that the limit is well-defined, at least in the weak-* topology, does not depend on the chosen sequence $f_j$, and moreover the pushforward formula holds. Since $\Wedge^kd_Tf[\tau]\in L^1(\mu)$, it follows that $f_*T$ has finite mass. Moreover, this also yields that the convergence happens in the strong topology. This proves (1) and (2). To prove (3) we apply the pushforward formula to $\partial T$ and use that, for Lipschitz maps, the pushforward commutes with the boundary to obtain
    \begin{align*}
    \langle \partial(f_*T),\omega\rangle&=\langle f_*T, d\omega\rangle\\
    &=\lim_{j\to\infty}\langle (f_j)_*T,d\omega\rangle\\
    &=\lim_{j\to\infty}\langle (f_j)_*\partial T,\omega\rangle\\
    &= \langle f_*\partial T,\omega\rangle.
    \end{align*}
    Thanks to (i) applied to $\partial T$ we know that this defines a current with finite mass, and thus $f_*T$ is normal.
\end{proof}

\section{Existence}\label{sec:existence}

In this section we establish the following existence result:

\begin{theorem}[Existence]\label{thm:existence}
Let $b:[0,1]\times\R^d\to\R^d$ be a vector field satisfying \eqref{eq:Lip_assumption} and let $(\Phi^0_t)_t$ be its flow. Moreover, let $\bar T\in \Nrm_k(\R^d)$. Then, the family of currents $T_t=(\Phi^0_t)_* \bar T$ is a solution to
\[
\begin{cases}
    \frac{\dd}{\dd t} T_t+\Lcal_{b_t}T_t=0\\
    T_0=\bar T.
\end{cases}
\]
\end{theorem}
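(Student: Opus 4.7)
The plan is to encode the entire evolution into a single $(k+1)$-dimensional space-time current and then read off the weak formulation \eqref{eq:def_weak_sense} from its boundary. Consider the normal space-time current
\[
\bar\Tbb := \dbr{(0,1)} \times \bar T
\]
in $\R\times\R^d$, with orientation $(1,0)\wedge\tau_{\bar T}$ and underlying measure $\L^1\rest(0,1)\otimes\|\bar T\|$. Since $\bar T$ is normal, both $\bar\Tbb$ and its lateral part $\dbr{(0,1)}\times\partial\bar T$ satisfy hypotheses (i) and (ii) of Proposition~\ref{prop:pushforward} (with $\tilde b\equiv 0$ and $\sup_t|\mu_t|(\R^d)=\|\bar T\|(\R^d)<\infty$). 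Since $\Psi\in AC_t\Lip_x$ by Lemma~\ref{lemma:gronwall}, Proposition~\ref{prop:pushforward} applies and defines the finite-mass currents
\[
\Tbb:=\Psi_*\bar\Tbb, \qquad \Sbb:=\Psi_*\big(\dbr{(0,1)}\times\partial\bar T\big).
\]

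\textbf{Slicing formula.} The central technical step is to verify that for every smooth $(k+1)$-form written as $\tilde\omega=\alpha+dt\wedge\beta$ (with $\alpha,\beta$ containing no $dt$),
\[
\langle \Tbb,\tilde\omega\rangle = \int_0^1\big[\langle b_t\wedge T_t,\alpha(t,\cdot)\rangle + \langle T_t,\beta(t,\cdot)\rangle\big]\,dt,
\]
together with the analogous identity $\langle\Sbb,\psi(t)\omega(x)\rangle=\int_0^1\psi(t)\langle b_t\wedge\partial T_t,\omega\rangle\,dt$ for $\omega\in\Dscr^k(\R^d)$ and $\psi\in C_c^\infty((0,1))$. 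To derive it, one plugs the tangent
\[
D\Psi(t,x)\big[(1,0)\wedge\tau_{\bar T}(x)\big]=(1,b_t(\Phi^0_t(x)))\wedge(0,\Wedge^k D\Phi^0_t[\tau_{\bar T}(x)])
\]
(whose time component comes from Lemma~\ref{lemma:diff_flow}(1), and whose space component is the standard Lipschitz pushforward tangent of $\Phi^0_t$) into the pushforward formula of Proposition~\ref{prop:pushforward}(2), then splits the resulting wedge pairing via Lemma~\ref{lemma:wedge_perp}(i), applied with the single direction $b_t$ (which is spatial, hence annihilates $dt$). Each resulting inner integral over $\R^d$ then reduces to the classical Lipschitz pushforward formula \eqref{eq:pushforward_lipschitz} applied to $T_t=(\Phi^0_t)_*\bar T$.

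\textbf{Extracting the GTE.} Fix $\omega\in\Dscr^k(\R^d)$ and $\psi\in C_c^\infty((0,1))$, and set $\tilde\omega:=\psi(t)\omega(x)$. Its exterior derivative reads $d\tilde\omega=\psi(t)d\omega+\psi'(t)dt\wedge\omega$, so the slicing identity applied to $d\tilde\omega$ yields
\[
\langle\Tbb,d\tilde\omega\rangle = \int_0^1\psi(t)\langle b_t\wedge T_t,d\omega\rangle\,dt + \int_0^1\psi'(t)\langle T_t,\omega\rangle\,dt.
\]
On the other hand, since $\psi$ is supported in $(0,1)$, the endpoint atomic pieces $(\delta_1-\delta_0)\times\bar T$ of $\partial\bar\Tbb$ do not pair with $\tilde\omega$, and a limiting argument using the Lipschitz approximants $\Psi_j$ from Lemma~\ref{lemma:Lipschitz_approximation} (for which the pushforward commutes with the boundary) gives $\langle\partial\Tbb,\tilde\omega\rangle=-\langle\Sbb,\tilde\omega\rangle=-\int_0^1\psi(t)\langle b_t\wedge\partial T_t,\omega\rangle\,dt$. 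Equating the two expressions and using $\langle\partial(b_t\wedge T_t),\omega\rangle=\langle b_t\wedge T_t,d\omega\rangle$ together with $\Lcal_{b_t}T_t=-b_t\wedge\partial T_t-\partial(b_t\wedge T_t)$ produces exactly \eqref{eq:def_weak_sense}. The initial condition $T_0=\bar T$ follows from $\Phi^0_0=\mathrm{Id}$.

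\textbf{Main obstacles.} The two delicate points are: (a) verifying the slicing formula, which requires a careful interplay of the pushforward formula of Proposition~\ref{prop:pushforward}(2), the tangent computation of Lemma~\ref{lemma:diff_flow}, and the wedge algebra of Lemma~\ref{lemma:wedge_perp}; and (b) handling $\partial\bar\Tbb$, whose atomic endpoint pieces $(\delta_1-\delta_0)\times\bar T$ are not absolutely continuous in time and therefore fall outside hypothesis (i) of Proposition~\ref{prop:pushforward}(3). The second issue is bypassed either by restricting the test forms to be supported away from the endpoints, or by combining Proposition~\ref{prop:pushforward}(3) on the $AC_t$-interior piece $\dbr{(0,1)}\times\partial\bar T$ with the elementary Lipschitz pushforward by $\Phi^0_0=\mathrm{Id}$ and $\Phi^0_1$ on the endpoint slices.
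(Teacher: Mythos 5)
Your proof follows essentially the same strategy as the paper's: form the cylinder $\dbr{0,1}\times\bar T$, push it forward by $\Psi$ via Proposition~\ref{prop:pushforward}, compute the orienting vector through Lemma~\ref{lemma:diff_flow} and Lemma~\ref{lemma:chain_rule}, and extract \eqref{eq:def_weak_sense} by testing against $\psi(t)\omega(x)$ and splitting the resulting wedge pairings. The only minor imprecision is that pairing the pushed-forward tangent with the $dt$-free component of $d\tilde\omega$ requires Lemma~\ref{lemma:wedge_perp}(ii) (to annihilate the $(1,0)$-direction of $(1,b_t)$) and not part (i) alone; otherwise your argument, including the explicit treatment of the non-$AC_t$ endpoint slices of the cylinder's boundary, matches the paper's proof.
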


\begin{proof} We subdivide the proof in some steps.

    \textit{Step 1.} Write $\bar T = \bar \tau \bar \mu$ and $\partial \bar T = \bar \sigma\bar \nu$. Define 
    \[
    C = \bbb{ 0,1 } \times \bar T = [\ee_0 \wedge (i_t)_*\bar \tau] \L^1\otimes \bar \mu = [\ee_0 \wedge (i_t)_*\bar \tau] \L^1\times \bar \mu
    \]
    and observe that $\partial C \restrict ((0,1) \times \R^d)  = - \bbb{0,1} \times \partial \bar T$. 
    Define 
    \[
    Z := \Psi_{*} C, 
    \]
    where $\Psi$ is the map defined in \eqref{eq:def_Psi}. 
    We now apply the formula for the pushforward in Proposition \ref{prop:pushforward} to the current $C$. Notice that the assumptions (i) and (ii) on the uniform mass of the measures are automatically satisfied, as $\bar \mu$ is independent of $t$ and $\tilde b=0$. Therefore we get that the current $Z$ can be written as $Z = z\lambda$, where 
    \[
    \lambda = \Psi_{\#}(\L^1\otimes\mu) = \L^1 \otimes {(\Phi^0_t)}_{\#} \mu =: \L^1 \otimes \mu_t 
    \]
    and 
    \begin{align*}
    z(s,y) & = \Psi_*([\ee_0 \wedge (i_s)_*\bar \tau])(s,y)\\ 
    & = D\Psi(\Psi^{-1}(s,y)) [(\ee_0 \wedge (i_s)_*\bar \tau)(\Psi^{-1}(s,y))]\\ 
    & = D\Psi(\Psi^{-1}(s,y)) [\ee_0] \wedge D\Psi(\Psi^{-1}(s,y))[((i_s)_*\bar \tau)(\Psi^{-1}(s,y))] \\
    & = (1,b_s(y)) \wedge D\Psi(\Psi^{-1}(s,y))[((i_s)_*\bar \tau)(\Psi^{-1}(s,y))] \\
    & = (1,b_s(y)) \wedge [\Psi_* ((i_s)_* \bar\tau)](s,y)\\
    & = (1,b_s(y)) \wedge [(\Psi_* \circ (i_s)_* )\bar\tau](s,y)\\
    & = (1,b_s(y)) \wedge [(\Psi \circ i_s)_* \bar\tau](s,y)\\
    & = (1,b_s(y)) \wedge [(i_s)_* (\Phi^0_s)_* \bar \tau] (s,y). 
    \end{align*}
    Here we used the fact that $D\Psi(\Psi^{-1}(s,y)) [\ee_0]= (1,b_s(y))$ by Lemma \ref{lemma:diff_flow}, Point (1), and from the second to last to the last line we used Lemma \ref{lemma:chain_rule}, together with the identity $\Psi \circ i_s = i_s \circ \Phi_s^0$, to exchange the order of the pushforwards.

    \textit{Step 2.} Now we test $\partial Z$ against forms of the kind $\eta = \tbf^* \alpha \wedge \pbf^* \beta$. 
    We have $d\eta = \tbf^*d\alpha \wedge \pbf^*\beta + \tbf^*\alpha \wedge \pbf^*d\beta$. So by Lemma \ref{lemma:wedge_perp}(i) we obtain  
    \begin{align*}
        \langle Z, \tbf^*d\alpha \wedge \pbf^*\beta\rangle & = \int_0^1 \int_{\R^d} \langle (1,b_s) \wedge [(i_s)_* (\Phi^0_s)_* \bar \tau] ,  \tbf^*d\alpha \wedge \pbf^*\beta \rangle \, \dd \mu_s \, \dd s \\ 
        & =  \int_0^1 \int_{\R^d} \langle (1,b_s),  \tbf^*d\alpha \rangle \, \langle (i_s)_* (\Phi^0_s)_* \bar \tau ,\pbf^*\beta\rangle \,\dd \mu_s \, \dd s \\ 
        & =  \int_0^1 \int_{\R^d} \alpha'(s) \, \langle \tau_s,\beta\rangle \,\dd \mu_s \, \dd s \\ 
        & = \int_0^1 \alpha'(s) \langle T_s,  \beta\rangle \, \dd s 
    \end{align*}
    where we have defined $\tau_s:=(\Phi^0_s)_* \bar \tau$, so that by definition $T_s= \tau_s \mu_s$.
    
    For the second summand, invoking  Lemma \ref{lemma:wedge_perp}(ii) we have 
    \begin{align*}
        \langle Z, \tbf^*\alpha \wedge \pbf^*d\beta \rangle & = \int_0^1 \int_{\R^d} \langle (1,b_s) \wedge [(i_s)_* (\Phi^0_s)_* \bar \tau] ,  \tbf^*\alpha \wedge \pbf^*d\beta \rangle \, \dd \mu_s \, \dd s \\ 
        &= \int_0^1 \int_{\R^d} \alpha(s) \langle b_s \wedge [(\Phi^0_s)_* \bar \tau], d\beta \rangle \dd \mu_s \, \dd s \\ 
        & =\int_0^1 \alpha(s)\langle \partial(b_s \wedge T_s), \beta \rangle \, \dd s.  
    \end{align*}

    \textit{Step 3.} By Proposition \ref{prop:pushforward}(3), we have 
    \[
    \partial Z \restrict ((0,1) \times \R^d)   = \Psi_{*} (\partial C)\restrict ((0,1) \times \R^d) = \Psi_{*} (
    \partial C\restrict ((0,1) \times \R^d) )= \Psi_{*} (- \bbb{0,1} \times \partial \bar T).
    \]
    Now the orienting field is 
    \[
    \xi(s,y) = \Psi_{*}(-\ee_0 \wedge \overline\sigma)(s,y) = - (1,b_s(y)) \wedge \Psi_{*}((i_{s})_{*} \overline\sigma)(s,y)
    \]
    where we have used again Lemma \ref{lemma:diff_flow} Point (1) to deduce   
    \[
    \Psi_{*}(\ee_0)(s,y) = D\Psi(\Psi^{-1}(s,y))[1,0] = (1,b_s(y)). 
    \]
    Notice also that the mass measure of $\partial Z$ is 
    \[
    \|\partial Z\|=\Psi_{\#}(\|\partial C\|) = \Psi_{\#} (\L^1 \otimes \overline{\nu} ) = \L^1 \otimes \nu_s
    \]
    where we have set $\nu_s := (\Phi^0_s)_{\#}\overline{\nu}$.
    Testing against forms of the type $\eta=\tbf^*\alpha\wedge \pbf^*\beta$, with $\alpha\in \Crm^\infty_c((0,1))$ and $\beta\in \Dscr^k(\R^d)$, we therefore have
    \begin{align*}
        \langle \partial Z, \eta \rangle & = \int_0^1 \int_ {\R^d} \langle \xi, \eta \rangle \, \dd \nu_s \dd s \\ 
        & = - \int_0^1 \int_{\R^d} \langle (1, b_s) \wedge \Psi_{*}((i_{s})_{*} \overline\sigma) , \eta \rangle \,\dd\nu_s \dd s \\ 
        & = - \int_0^1 \int_{\R^d} \langle (1, b_s) \wedge \Psi_{*}((i_{s})_{*}\overline\sigma) , \tbf^* \alpha \wedge \pbf^* \beta \rangle \,\dd\nu_s \dd s \\ 
        & = - \int_0^1 \alpha(s) \int_{\R^d} \langle (1, b_s) \wedge (i_s)_* (\Phi^0_s)_* \overline\sigma , \pbf^* \beta \rangle \,\dd\nu_s \dd s \\ 
        & = - \int_0^1 \alpha(s) \int_{\R^d} \langle b_s \wedge (\Phi^0_s)_* \overline\sigma ,  \beta \rangle \,\dd\nu_s \dd s \\ 
        & = - \int_0^1 \alpha(s) \langle b_s \wedge \partial T_s , \beta \rangle \dd s 
    \end{align*}
    where in the second to last line we have used Lemma \ref{lemma:wedge_perp}(ii) and the fact that 
    \[
    \partial T_s = (\Phi^0_s)_{*}(\partial \overline{T}) = [(\Phi^0_s)_{*}\overline{\sigma}] \, (\Phi^0_s)_{\#} (\overline{\nu})
    \]
    for every $s$.

    \emph{Step 4.}
Combining Step 2 and 3 we obtain that
		\begin{align*}
		-\int_0^1\alpha(s) \, \dprb{ b\wedge\partial T_s, \beta}\;\dd s 
		&=\dprb{ \partial Z,\eta} \\
		&=\dprb{  Z,d\eta} \\
		&= \int_0^1 \alpha'(s) \, \dprb{ T_s,\beta} + \alpha(s) \, \dprb{ b\wedge T_s,d\beta}\;\dd s
		\end{align*}
		for every $\alpha\in\Dscr^0((0,1))=\Crm_c^\infty((0,1))$ and every $\beta\in\Dscr^k(\R^d)$.
		Rearranging terms gives exactly the weak formulation of~\eqref{eq:GTE}.
\end{proof}

\begin{remark} Theorem \ref{thm:existence} could also be proved with an approximation argument that does not rely on the decomposability bundle.
Consider two functions $\phi \in\Crm_c^\infty((-1,1))$ and $\psi \in \Crm_c^\infty(B_1)$ and construct the associated mollifiers 
\[
\phi^\eps(t):=\frac{1}{\eps}\phi(t/\eps),\qquad \psi^\eps(x):=\frac{1}{\eps^d}\psi(x/\eps).
\]
Let $b^\eps:=b* (\phi^\eps\psi^\eps)$ (after extending $b$ to the identically zero vector field for $t<0$ or $t>1$) and let $\Phi^\eps_t$ be the corresponding flow (starting from time $t=0$). Then 
\begin{align*}
|\Phi_t(x) - \Phi^\eps_t(x) | & \le \int_0^t|b_s(\Phi_s(x)) - b_s^\eps(\Phi_s^\eps(x))|\, \dd s \\ 
& \le \int_0^t|b_s(\Phi_s(x)) - b_s(\Phi^\eps_s(x))| \,  \dd s + \int_0^t |b_s(\Phi^\eps_s(x))- b_s^\eps(\Phi^\eps_s(x))|\, \dd s \\ 
& \le \int_0^t \Lip_x(b_s)|\Phi_s(x)-\Phi^\eps_s(x)| \, \dd s + \int_0^t \|b_s-b_s^\eps\|_{\infty}\, \dd s
\end{align*}
and therefore by Gronwall we obtain 
\[
\| \Phi_t - \Phi^\eps_t \|_{\infty} \le \int_0^t \|b_s-b_s^\eps\|_{\infty}\, \dd s \cdot \exp{\left(\int_0^t \Lip_x(b_s)\, \dd s\right)}.
\]
Additionally, it can be shown that the Lipschitz constant in space of the maps $\Phi^\eps_t(\cdot)$ is equibounded. Indeed, a simple convolution argument yields 
\[
\int_0^1 \Lip(b^{\eps}_t) \, \dd t\le \int_0^1 \Lip(b_{t}) * \alpha^\eps(t) \, \dd t \le L,  
\]
whence $\Lip(\Phi_t^\eps) \le \exp(Lt)$. 
In order to show existence of solutions to \eqref{eq:GTE} it is therefore sufficient to consider the family of pushforwards $T^\e_t:=(\Phi^\e_t)_{*} \overline{T}$, which solves the equation for $b^\eps$ by \cite[Theorem 3.6]{BDR}. Their mass is equibounded (in view of the equi-Lipschitz bound on $\Phi^\eps_t(\cdot)$), and any limit point satisfies \eqref{eq:GTE} by linearity of the equation. Moreover, using that $\Phi^\eps_t$ converges uniformly to $\Phi^0_t$ with equi-Lipschitz constant, it can also be shown that the limit current $T_t$ coincides with $(\Phi^0_t)_{*} \overline{T}$ mimicking the argument in \cite[Lemma 7.4.3]{KrantzParks08book}.
\end{remark}

\section{Uniqueness}\label{sec:uniqueness}

\begin{theorem}[Uniqueness]\label{thm:uniqueness}
Let $b:[0,1]\times\R^d\to\R^d$ be a vector field satisfying \eqref{eq:Lip_assumption} and let $(\Phi_t^0)_t$ be its flow. Moreover, let $\bar T\in \Nrm_k(\R^d)$. Then, there is at most one solution to
\[
\begin{cases}
    \frac{\dd}{\dd t} T_t+\Lcal_{b_t}T_t=0\\
    T_0=\bar T
\end{cases}
\]
in the class $L^\infty([0,1]; \Nrm_k(\R^d))$. Moreover, the solution is given by $T_t=(\Phi^0_t)_*\bar T$.
\end{theorem}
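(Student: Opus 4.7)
The plan is to reverse the construction of Theorem \ref{thm:existence}. Given any solution $(T_s)_s\in L^\infty([0,1];\Nrm_k(\R^d))$ with $T_0=\bar T$, write $T_s=\tau_s\mu_s$ and introduce the space-time current
\[
Z := \bigl[(1,b_s(y)) \wedge (i_s)_*\tau_s(y)\bigr]\,\L^1 \otimes \mu_s
\]
on $\R\times\R^d$. The idea is to show that the pullback $C := (\Psi^{-1})_* Z$ is a product current of the form $\ee_0\wedge(\text{time-independent})$, which after identifying the time slices forces $T_s=(\Phi^0_s)_*\bar T$ for $\L^1$-a.e.\ $s$: this is enough for uniqueness in the class $L^\infty([0,1];\Nrm_k(\R^d))$.

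First I would check that $Z$ is a normal $(k{+}1)$-current and compute $\partial Z\rest ((0,1)\times\R^d)$. Testing $Z$ against $d(\tbf^*\alpha\wedge\pbf^*\beta)$ with $\alpha\in\Crm_c^\infty((0,1))$ and $\beta\in\Dscr^k(\R^d)$, and applying Lemma \ref{lemma:wedge_perp} exactly as in Step 2 of the proof of Theorem \ref{thm:existence}, one obtains
\[
\langle Z, d(\tbf^*\alpha\wedge\pbf^*\beta)\rangle = \int_0^1 \alpha'(s)\langle T_s,\beta\rangle\,\dd s + \int_0^1 \alpha(s)\langle \partial(b_s\wedge T_s),\beta\rangle\,\dd s.
\]
Rewriting the first integral via the weak form \eqref{eq:def_weak_sense} of \eqref{eq:GTE} cancels the $\partial(b\wedge T)$ contribution and leaves $-\int_0^1 \alpha(s)\langle b_s\wedge\partial T_s,\beta\rangle\,\dd s$. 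Hence $\partial Z \rest ((0,1)\times\R^d)$ has orientation $-(1,b_s)\wedge(i_s)_*\sigma_s$ and carrier $\L^1\otimes\nu_s$ (writing $\partial T_s=\sigma_s\nu_s$), with finite mass thanks to \eqref{eq:Lip_assumption} and the uniform bound on $\|\partial T_s\|$; in particular $Z$ is normal.

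Next I would invoke Proposition \ref{prop:pushforward} with the proper map $\Psi^{-1}$, which lies in $AC_t\Lip_x$ by Lemma \ref{lemma:gronwall}. Its hypotheses $(i)$--$(ii)$ hold both for $Z$ and for $\partial Z\rest((0,1)\times\R^d)$ with $\tilde b = b$, thanks to the $L^\infty([0,1];\Nrm_k)$ assumption on $(T_s)$. The decisive ingredient is Lemma \ref{lemma:diff_flow}(2), which gives $D\Psi^{-1}[(1,b_s(y))]=\ee_0$ for $\L^1$-a.e.\ $s$ and every $y$. Combined with Lemma \ref{lemma:chain_rule} and the explicit form of the spatial slices of $\Psi^{-1}$, this reduces the orientation of $C=(\Psi^{-1})_* Z$ at a.e.\ point to
\[
\ee_0\wedge(i_s)_*\bigl(((\Phi^0_s)^{-1})_*\tau_s\bigr),
\]
so that $C = [\ee_0\wedge(i_s)_*\tilde\tau_s]\,\L^1\otimes\tilde\mu_s$ with $\tilde R_s:=((\Phi^0_s)^{-1})_* T_s = \tilde\tau_s\tilde\mu_s$. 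Analogously, $(\Psi^{-1})_*\partial Z$ has orientation $-\ee_0\wedge(i_s)_*\tilde\sigma_s$, where $\tilde\sigma_s$ is the polar of $\partial \tilde R_s$.

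To conclude, Proposition \ref{prop:pushforward}(3) yields $\partial C=(\Psi^{-1})_*\partial Z$ on $(0,1)\times\R^d$. Testing both sides against $\tbf^*\alpha\wedge\pbf^*\beta$, Lemma \ref{lemma:wedge_perp} gives $\int_0^1 \alpha'(s)\langle \tilde R_s,\beta\rangle\,\dd s$ on the left and $0$ on the right (the $\ee_0$ factor in the pushed-forward orientation kills every purely horizontal $k$-form). Therefore $s\mapsto \langle \tilde R_s,\beta\rangle$ is constant for every $\beta\in\Dscr^k(\R^d)$, and the initial condition $T_0=\bar T$ forces $\tilde R_s=\bar T$ for $\L^1$-a.e.\ $s$, i.e.\ $T_s=(\Phi^0_s)_*\bar T$. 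The main obstacle along the way is precisely the use of the $AC_t\Lip_x$ pushforward: Proposition \ref{prop:pushforward}, together with Theorem \ref{thm:differentiability} and Lemma \ref{lemma:diff_flow}(2), must be invoked to give the "straightening" identity for $C$ a pointwise meaning, since $\Psi^{-1}$ is merely absolutely continuous in time; once the hypotheses of Proposition \ref{prop:pushforward} are checked simultaneously for $Z$ and $\partial Z$, the remaining bookkeeping is straightforward.
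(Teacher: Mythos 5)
Your proposal has the right overall shape: building the space-time current $Z$ (called $U$ in the paper) out of the candidate solution, straightening it by pulling back through $\Psi^{-1}$ with Proposition \ref{prop:pushforward}, and using Lemma \ref{lemma:diff_flow}(2) together with Lemma \ref{lemma:wedge_perp} to read off that the time-slices $(\Phi^0_s)^{-1}_*T_s$ are constant. This matches Steps 1--3 of the paper's argument. The genuine gap is in the passage \emph{``Hence $\partial Z\restrict((0,1)\times\R^d)$ has orientation $-(1,b_s)\wedge(i_s)_*\sigma_s$ and carrier $\L^1\otimes\nu_s$, \dots; in particular $Z$ is normal.''} Your computation of $\langle Z, d(\tbf^*\alpha\wedge\pbf^*\beta)\rangle$ only determines how $\partial Z$ pairs against forms of the special product type $\tbf^*\alpha\wedge\pbf^*\beta$, which contain no $\dd t$-component beyond the scalar factor $\alpha(t)$. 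This neither pins down $\partial Z$ as a current nor shows that it has finite mass. But to apply Proposition \ref{prop:pushforward}(3) to $\Psi^{-1}$ --- and likewise to use the pushforward formula to compute the orientation of $(\Psi^{-1})_*\partial Z$ --- you need precisely that $\partial Z\restrict((0,1)\times\R^d)$ is a finite-mass current of the structure prescribed in hypotheses (i)--(ii), and your proposal asserts this without proving it.

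The paper avoids exactly this difficulty by a two-stage reduction: it first assumes $\partial T_t=0$, where the needed identity $\partial U\restrict((0,1)\times\R^d)=0$ is imported from \cite[Lemma 4.1]{BDR-Lipschitz} (a standalone lemma whose proof necessarily tests against \emph{all} forms, not just the product ones); then, in Step 4, it handles general normal $T_t$ by first applying the boundaryless result to $(\partial T_t)_t$ and then to $S_t:=T_t-(\Phi^0_t)_*\bar T$. Your approach would go through if you proved a version of \cite[Lemma 4.1]{BDR-Lipschitz} identifying $\partial Z$ in the interior when $\partial T_t\ne 0$, but that is not done here, and sidestepping it is precisely the role of the paper's Step 4. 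A secondary, smaller point: the weak-derivative argument only gives constancy of $s\mapsto\langle \tilde R_s,\beta\rangle$ for $\L^1$-a.e.\ $s$; to attach the initial datum one must pass to the weakly$^*$-continuous representative of $t\mapsto T_t$, as the paper does at the start of Step 1.
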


\begin{proof}
    We first show the conclusion under the assumption that $\partial T_t=0$ for every $t \in [0,1]$.  

    \textit{Step 1.} 
    Let $(T_t)_{t \in (0,1)} \subset \Nrm_k(\R^d)$ with $\partial T_t=0$ ($t \in[0,1)$) be a weakly$^*$-continuous solution to~\eqref{eq:GTE} (see~\cite[Lemma 3.5(i)]{BDR} for why we may assume weak*-continuity). We decompose $T_t=\vec{T}_t\|T_t\|$, with $\vec{T}_t$ unit $k$-vectors.
	Let $\vec{T}:(0,1)\times\R^d\to \Wedge_k(\R\times\R^d)$ be the $k$-vector field defined $\L^1_t \otimes\|T_t\|$-almost everywhere by
	\[
	  \vec{T}(t,x):=(\iota_t)_* \vec{T}_t(x),
	 \]
	 where we recall that $\iota_t(x):=(t,x)$.
    Define now the space time currents $U=u\mu$ with $\mu=\Lscr^1\otimes\|T_t\|$ and $u(t,x)=(1,b(t,x))\wedge \vec{T}(t,x)$ and 
    \[
    W:=(\Psi^{-1})_*U.
    \]
    The map $\Psi^{-1}$ is still $AC_t\Lip_x$ and the map $t \mapsto \|T_t\|(\R^d)$ is essentially bounded by assumption, so we can define the pushforward through Proposition \ref{prop:pushforward}. In particular, we can write $W=w \nu$, where 
    \[
    \nu = \Psi^{-1}_{\#}(\L^1 \otimes \|T_t\|) = \L^1 \otimes [(\Phi_t^0)^{-1}_{\#} \|T_t\|] =: \L^1 \otimes \nu_t 
    \]
    and 
    \begin{align*}
    w(t,x)&=D\Psi^{-1}(\Psi(t,x))[(1,b(t,\Phi^0_t(x)))]\wedge D\Psi^{-1}(\Psi(t,x))[\vec T(t,\Phi^0_t(x))]\\
    &=(1,0)\wedge D\Psi^{-1}(\Psi(t,x))[\vec T(t,\Phi^0_t(x)]\\
    &=:(1,0)\wedge \tau(t,x).
    \end{align*}
    where we have used Lemma \ref{lemma:diff_flow} Point (2). 
    
    \emph{Step 2.} 
    We can estimate the mass of $U$ by 
    \[
    \Mbf(U) \le \int_{0}^1 \|b_t\|_{C^0(\R^d)}\Mbf(T_t) \, \dd t \le L \esssup_{t \in (0,1)} \Mbf(T_t) < \infty. 
    \]
    Moreover, applying \cite[Lemma 4.1]{BDR-Lipschitz} (replacing $b(x)$ with $b_t(x)$ everywhere) we have 
    \[
    \partial U \restrict (0,1) \times \R^d = 0
    \]
    and therefore by Proposition \ref{prop:pushforward}(Point 3), $\partial W \restrict (0,1) \times \R^d= 0$. This means that for every $\alpha\in\Dscr^0((0,1))=\Crm_c^\infty((0,1))$ and every $\beta\in\Dscr^k(\R^d)$, it holds
    \[
    0= \langle \partial W, \tbf^*\alpha \wedge \pbf^*\beta \rangle = \langle W, \tbf^*d \alpha \wedge \pbf^*\beta \rangle 
    \]
    where in the last equality we have used Lemma~\ref{lemma:wedge_perp}~(ii) (notice that $\mathrm{span}\,(\pbf^*d\beta)\perp (1,0)$ and $w(t,x) =(1,0)\wedge \tau(t,x)$). 
    By Lemma~\ref{lemma:wedge_perp}~(i) we have 
    \begin{align*}
    \langle w(t,x), (\tbf^*d\alpha \wedge \pbf^*\beta) (t,x) \rangle  & = \langle (1,0), \tbf^*d\alpha(t,x) \rangle \langle \tau(t,x) , \pbf^*\beta (t,x)\rangle  \\ 
    & = \alpha'(t)  \langle \tau(t,x) , \pbf^*\beta (t,x)\rangle  \\ 
    & = \alpha'(t)  \langle D\Psi^{-1}(\Psi(t,x))[\vec T(t,\Phi^0_t(x)], \pbf^*\beta (t,x)\rangle  \\ 
    &= \alpha'(t)  \langle [(\Psi^{-1})_{*}\vec T](t,x), \pbf^*\beta (t,x)\rangle.
    \end{align*}
    Now we observe that for every fixed $t \in (0,1)$ it holds 
    \[
    \pbf \circ \Psi^{-1} \restrict \{t\}\times \R^d = [(\Phi^0_{t})^{-1} \circ \pbf] \restrict \{t\}\times \R^d
    \]
    and therefore 
    \[
    (\pbf \circ \Psi^{-1})_{*}v = ((\Phi^0_{t})^{-1} \circ \pbf)_{*} v
    \]
    for any $k$-vector $v\in \Wedge_k(\{0\} \times \R^d)$. In particular, choosing $v= \vec{T}(\Psi(t,x))=T(t,\Phi^0_t(x))$ we get by Lemma \ref{lemma:chain_rule} Point (i) 
    \[
    (\pbf \circ \Psi^{-1})_{*}  \vec{T}(\Psi(t,x)) = ((\Phi^0_{t})^{-1} \circ \pbf)_{*} \vec{T}(\Psi(t,x)) = [(\Phi^0_t)^{-1}]_{*}[T_t(\Phi_t^0(x))]. 
    \]
    Therefore 
    \begin{align*}
        0= \langle W, \tbf^*d\alpha \wedge \pbf^*\beta \rangle & = \int_0^1 \int_{\R^d} \langle w, \tbf^*d\alpha \wedge \pbf^*\beta\rangle \, \dd \nu_t \, \dd t \\ 
        & =  \int_0^1 \int_{\R^d}\alpha'(t) 
        \langle (\Phi^0_t)^{-1}_{*}[T_t(\Phi_t^0(x))],\beta(x) \rangle \, \dd [(\Phi^0_t)^{-1}_{\#}\|T_t\|](x) \, \dd t \\ 
        & = \int_0^1 \alpha'(t) \langle (\Phi_t^0)^{-1}_{*} T_t, \beta \rangle \, \dd t.
    \end{align*}
    
    \emph{Step 3.} By Step 1 and Step 2 we have shown that 
		\[
		\int_0^1 \alpha'(t) \, \dprb{ (\Phi^0_{t})^{-1}_* T_t,\beta} \;\dd t=0\qquad\text{for every $\alpha\in\Dscr^0((0,1))$ and $\beta\in\Dscr^k(\R^d)$.}
		\]
		This implies that, for any fixed $\beta\in\Dscr^k(\R^d)$, the map $t\mapsto \dprb{ (\Phi^0_{t})^{-1}_* T_t,\beta} $ is constant, and in particular equal to its value at 0 (in this context recall that we have chosen the weakly$^*$-continuous representative of $t\mapsto T_t$). We conclude that $(\Phi^0_{t})^{-1}_* T_t = \overline T$ for every $t \in [0,1]$. Since for fixed $t \in [0,1]$ the map $\Phi^0_t$ is Lipschitz with inverse $(\Phi^0_{t})^{-1}$ we can write $T_t=(\Phi^0_t)_* \overline T$ which is the desired conclusion in the case $\partial T_t = 0$.

        \textit{Step 4.} Assume now that $(T_t)_{t \in (0,1)}$ is a solution of~\eqref{eq:GTE} with $T_t$ normal but not necessarily boundaryless. Testing with exact forms $\omega=d\eta$, we see that the boundaries $(\partial T_t)_{t\in (0,1)}$ solve the geometric transport equation with the same driving vector field $b$ and initial datum $\partial \overline{T}$. Since the $\partial T_t$ are boundaryless, by Step 3 we obtain that 
		\[
		\partial T_t=(\Phi^0_t)_* (\partial\overline{T})=\partial ((\Phi^0_t)_*\overline{T}).
		\]
		Consider now $S_t:=T_t-(\Phi^0_t)_* \overline{T}$. The $S_t$ are normal currents with equibounded mass w.r.t $t$ and $\partial S_t=0$. By the linearity of~\eqref{eq:GTE}, they still solve the same equation. Since $S_0=0$, again by Step 3 we conclude that $S_t=0$ for every $t \in (0,1)$, hence $T_t=(\Phi^0_t)_* \overline{T}$ for every $t \in (0,1)$.
	\end{proof}

\section{On non-uniqueness for finite mass currents}\label{sec:finite_mass}

One may wonder what happens if we consider \eqref{eq:GTE} in the class of currents with finite mass, but whose boundary might have unbounded mass. In this case the weak formulation of the PDE makes sense if $b$ is smooth, but if $b$ is only Lipschitz it is not clear how to even state the weak formulation. The issue lies in the term $b\wedge \partial T_t$ whose definition is unclear -- somewhat similarly to what happens when trying to define the product of a distribution with a Lipschitz function. 

In this section we shall see that even if one comes up with a notion of solution for finite mass currents, any sensible notion that satisfies very natural assumptions leads to non-uniqueness.

Let us consider the family $\Ccal=L^1([0,1];\Mrm_k(\R^d))$ consisting of all possible curves of currents $t \mapsto T_t$ that satisfy
\[
\int_0^1 \Mbf(T_t) \, \dd t < +\infty.
\]
If $b=b(t,x)$ is smooth, then the weak formulation \eqref{eq:def_weak_sense} is actually equivalent to the following one, which requires only the masses (and not the normal masses) to be integrable in time, namely $(T_t)_t \in \Ccal$ is a weak solution to \eqref{eq:GTE} if 
\begin{equation}\label{eq:def_weak_sol_smooth_field}
\int_0^1 \langle T_t, \omega\rangle \psi'(t) + \langle T_t,\Lcal_{b_t} \omega \rangle \psi(t) \,\dd t  = 0 
\end{equation}
for every $\omega \in \Dscr^k(\R^d)$ and every $\psi \in \Crm_c^\infty((0,1))$. Here $\Lcal_{b_t} \omega$ stands for the classical Lie derivative of the differential form $\omega$ as defined in Differential Geometry.
For a proof of this equivalence see \cite[Lemma 3.3]{BDR}.

\begin{definition}\label{def:natural_family}
Fix a Borel vector field $b$. We say that a family $\Scal_b\subset \Ccal$ is a \textit{natural family of solutions} to \eqref{eq:GTE} with vector field $b$ if the following conditions hold:
\begin{enumerate}
    \item[(H1)] \textit{Consistency}: If $b$ is smooth in some open set $\Omega$ and $(T_t)_{t\in[0,1]} \in \Ccal$ is a weak solution in the sense of \eqref{eq:def_weak_sol_smooth_field} supported in $\Omega$, then $(T_t)_{t\in[0,1]}\in\Scal_b$.
    \item[(H2)] \textit{Stability}: if $(T_t^j)_{t\in[0,1]}$, $j\in\mathbb{N}$, is a sequence in $\Scal_b$ with equibounded $L^1$-norm, namely 
    \[
    \sup_j \int_0^1 \Mbf(T_t^j) \, \dd t < \infty 
    \]
    and if $T_t=\lim_{j\to\infty} T_t^j$ for every $t$, then also $(T_t)_{t\in[0,1]}$ belongs to $\Scal_b$.
\end{enumerate}
\end{definition}

Observe that both conditions are very natural: the stability is natural because the PDE is linear, while the well-posedness for smooth vector fields is ensured by the following result:

\begin{proposition}
    Let $b=b(t,x)$ be a smooth globally bounded vector field. Then the problem
    \[
\begin{cases}
    \frac{\dd}{\dd t} T_t+\Lcal_{b_t}T_t=0\\
    T_0=\bar T
\end{cases}
\]
    admits a solution which is unique in the class $L^1([0,1];\Mrm_k(\R^d))$.
\end{proposition}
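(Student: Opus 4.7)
The plan is to obtain existence directly from the formula $T_t = (\Phi_t^0)_* \bar T$, which is already covered by Theorem~\ref{thm:existence} since every smooth globally bounded $b$ satisfies \eqref{eq:Lip_assumption}. The resulting current lies even in $L^\infty([0,1];\Nrm_k(\R^d)) \subset L^1([0,1];\Mrm_k(\R^d))$, so existence is essentially for free. The real content is uniqueness, which I would prove by a duality argument available precisely because $b$ is smooth.

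By linearity of \eqref{eq:GTE} I may assume $\bar T = 0$ and aim to show $T_t = 0$ for a.e. $t$. Since $b$ is smooth, the equivalent weak formulation \eqref{eq:def_weak_sol_smooth_field} is available, and by tensorizing test forms of the type $\psi(t)\eta(x)$ and passing to linear combinations one extends it to
\[
\int_0^1 \langle T_t, \partial_t \omega_t + \Lcal_{b_t}\omega_t\rangle\,\dd t = 0 \qquad \text{for every } \omega \in \Crm_c^\infty\bigl((0,1)\times\R^d;\Wedge^k\R^d\bigr).
\]
To exploit this, I fix $\bar\omega \in \Dscr^k(\R^d)$ and $t_1 \in (0,1)$ and seek $\omega_t$ with $\omega_{t_1} = \bar\omega$ solving the \emph{dual} transport equation $\partial_t\omega_t + \Lcal_{b_t}\omega_t = 0$. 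Because $b$ is smooth and globally bounded, the flows $\Phi_{t_1}^t$ are $\Crm^\infty$ diffeomorphisms with uniformly bounded displacement, and the pullback
\[
\omega_t := (\Phi_{t_1}^t)^*\bar\omega
\]
provides the desired solution, smooth in $(t,x)$ and compactly supported uniformly in $t$; a short computation along characteristics confirms that it satisfies the dual equation.

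I then plug $\chi_\eps(t)\omega_t$ into the extended weak formulation, where $\chi_\eps \in \Crm_c^\infty((0,1))$ is a smooth cutoff approximating $\1_{(0,t_1)}$. The bulk term $\chi_\eps(t)\langle T_t,\partial_t\omega_t+\Lcal_{b_t}\omega_t\rangle$ vanishes identically, leaving only the boundary contribution
\[
\int_0^1 \chi_\eps'(t)\,\langle T_t, \omega_t\rangle\,\dd t = 0.
\]
Choosing $t_1$ to be a Lebesgue point of the $L^1(\dd t)$ function $t\mapsto \langle T_t,\omega_t\rangle$ (and imposing an analogous one-sided condition at $t=0$), sending $\eps \to 0$ produces $\langle T_{t_1},\bar\omega\rangle = \langle \bar T,\omega_0\rangle = 0$. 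Letting $\bar\omega$ range over a countable dense family in $\Dscr^k(\R^d)$ forces $T_{t_1}=0$ for a.e. $t_1 \in (0,1)$.

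The only nontrivial step is the passage to the limit in the boundary integrals, which is constrained by the fact that $t\mapsto T_t$ is merely $L^1$ in time rather than weakly$^*$-continuous: one has to avoid the exceptional null set of times at which Lebesgue differentiation fails for the scalar functions $t\mapsto \langle T_t,\omega_t\rangle$, which becomes routine once $\bar\omega$ is restricted to a countable dense family. All other ingredients (the density of tensor-product test forms, the characteristics identity for the pullback, and the linearity reduction to $\bar T = 0$) are standard.
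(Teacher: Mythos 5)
Your strategy---solve the dual equation $\partial_t\omega_t+\Lcal_{b_t}\omega_t=0$ backwards along characteristics, substitute $\chi_\eps(t)\omega_t$ into the weak formulation, and let the test function collapse onto boundary terms---is the standard duality argument that works precisely because $b$ is smooth (so the pullback $(\Phi_s^t)^*\bar\omega$ is a smooth form). The paper itself gives only a one-line sketch deferring to \cite[Theorem 3.6]{BDR} and \cite[Section 8.1]{AGS}, so a detailed comparison is not possible, but duality is almost certainly what that reference does: the space-time current method used elsewhere in this paper needs normal currents (finite boundary mass), which is exactly what is \emph{not} available here.

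However, the final step as you have written it is circular. You fix $t_1$, build $\omega_t:=(\Phi_{t_1}^t)^*\bar\omega$ (which depends on $t_1$), deduce that the $L^1$ function $g(t):=\langle T_t,\omega_t\rangle$ equals $0$ for a.e.\ $t$, and then want to evaluate $g$ \emph{at the anchor point} $t=t_1$. The exceptional null set on which $g$ fails to vanish depends on $t_1$ through $\omega_t$, so the condition ``$t_1$ is a Lebesgue point of $g$'' is self-referential and does not rule out $t_1$ lying in its own exceptional set. The fix is to anchor the backward flow at a fixed endpoint instead: set $\omega_t:=(\Phi_0^t)^*\bar\omega$, so that $\omega_0=\bar\omega$ and the anchor is not the point at which you later evaluate. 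Then $t\mapsto\langle T_t,\omega_t\rangle$ is a.e.\ equal to $\langle \bar T,\bar\omega\rangle=0$ on $(0,1)$, and taking $\bar\omega$ in a countable dense set, using that $\bar\omega\mapsto(\Phi_0^t)^*\bar\omega$ is a bijection of $\Dscr^k(\R^d)$, yields $T_t=0$ for a.e.\ $t$.

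Two further points deserve attention. First, the meaning of $T_0=\bar T$ for a merely $L^1$-in-time curve must be pinned down before the boundary term at $t=0$ can be extracted: from the weak formulation and the local boundedness of $\Lcal_{b_t}\eta$ one gets $\frac{\dd}{\dd t}\langle T_t,\eta\rangle\in L^1((0,1))$ for each fixed test form $\eta$, hence a weakly$^*$-continuous representative, and it is for this representative that the initial condition and the evaluation at $0$ make sense. Second, your existence argument is slightly off: ``smooth and globally bounded'' does not imply \eqref{eq:Lip_assumption} ($\Lip(b_t)$ need not be finite), so Theorem~\ref{thm:existence} does not directly apply; moreover that theorem assumes $\bar T\in\Nrm_k(\R^d)$, whereas here $\bar T$ should be allowed to be an arbitrary finite-mass current (this is the whole point of the proposition). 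What saves the day is that for smooth $b$ the pushforward $(\Phi^0_t)_*\bar T$ can be checked to solve \eqref{eq:def_weak_sol_smooth_field} directly by the classical pushforward formula, without passing through the normal-current machinery.
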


\begin{proof}[Sketch of the proof] 
One can repeat the argument in the proof of \cite[Theorem 3.6]{BDR} with minor changes, referring also to \cite[Section 8.1]{AGS} for the modifications needed to deal with the two-parameter flow.
\end{proof}

\begin{figure}
    \centering
    \resizebox{.4\linewidth}{!}{%% Creator: Inkscape 1.2.2 (b0a8486541, 2022-12-01), www.inkscape.org
%% PDF/EPS/PS + LaTeX output extension by Johan Engelen, 2010
%% Accompanies image file '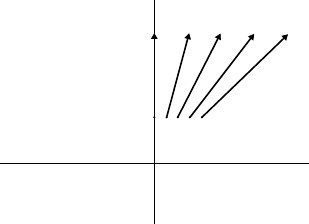' (pdf, eps, ps)
%%
%% To include the image in your LaTeX document, write
%%   \input{<filename>.pdf_tex}
%%  instead of
%%   \includegraphics{<filename>.pdf}
%% To scale the image, write
%%   \def\svgwidth{<desired width>}
%%   \input{<filename>.pdf_tex}
%%  instead of
%%   \includegraphics[width=<desired width>]{<filename>.pdf}
%%
%% Images with a different path to the parent latex file can
%% be accessed with the `import' package (which may need to be
%% installed) using
%%   \usepackage{import}
%% in the preamble, and then including the image with
%%   \import{<path to file>}{<filename>.pdf_tex}
%% Alternatively, one can specify
%%   \graphicspath{{<path to file>/}}
%% 
%% For more information, please see info/svg-inkscape on CTAN:
%%   http://tug.ctan.org/tex-archive/info/svg-inkscape
%%
\begingroup%
  \makeatletter%
  \providecommand\color[2][]{%
    \errmessage{(Inkscape) Color is used for the text in Inkscape, but the package 'color.sty' is not loaded}%
    \renewcommand\color[2][]{}%
  }%
  \providecommand\transparent[1]{%
    \errmessage{(Inkscape) Transparency is used (non-zero) for the text in Inkscape, but the package 'transparent.sty' is not loaded}%
    \renewcommand\transparent[1]{}%
  }%
  \providecommand\rotatebox[2]{#2}%
  \newcommand*\fsize{\dimexpr\f@size pt\relax}%
  \newcommand*\lineheight[1]{\fontsize{\fsize}{#1\fsize}\selectfont}%
  \ifx\svgwidth\undefined%
    \setlength{\unitlength}{148.13623143bp}%
    \ifx\svgscale\undefined%
      \relax%
    \else%
      \setlength{\unitlength}{\unitlength * \real{\svgscale}}%
    \fi%
  \else%
    \setlength{\unitlength}{\svgwidth}%
  \fi%
  \global\let\svgwidth\undefined%
  \global\let\svgscale\undefined%
  \makeatother%
  \begin{picture}(1,0.7255846)%
    \lineheight{1}%
    \setlength\tabcolsep{0pt}%
    \put(0,0){\includegraphics[width=\unitlength,page=1]{non-uniqueness-deltas.pdf}}%
    \put(0.34457625,0.3248185){\makebox(0,0)[lt]{\lineheight{1.25}\smash{\begin{tabular}[t]{l}$(0,\varepsilon)$\end{tabular}}}}%
    \put(0,0){\includegraphics[width=\unitlength,page=2]{non-uniqueness-deltas.pdf}}%
  \end{picture}%
\endgroup%
}
    \caption{We depicted the family $T_t=(e_2+te_1)\delta_{(t\eps,\eps)}$, which solves \eqref{eq:GTE} (in the classical sense) with initial datum $e_2\delta_{(0,\eps)}$. Assuming the Stability property, the limit as $\eps\to 0$ of this family gives rise to the solution $T^2_t=(e_2+te_1)\delta_0$ with initial datum $e_2\delta_0$. On the other hand, building an approximating family from below, the constant family $T^1_t=e_2\delta_0$ also solves the equation starting from the same initial datum. This shows non-uniqueness in the class of natural solutions $\Scal_b$ with finite mass as defined in Definition \ref{def:natural_family}. 
    }
    \label{fig:non-uniqueness}
\end{figure}

On the other hand, we show via an example that if $b$ is merely Lipschitz  uniqueness might fail: 

\begin{proposition}[Non-uniqueness for finite mass currents]
    There exists an autonomous Lipschitz vector field $b:\R^2\to\R^2$ for which the following holds: if $\Scal_b$ is any natural family of solutions then there exist two distinct solutions $(T_t^i)_{t\in[0,1]}\in \Scal_b$, $i=1,2$, starting from the same initial datum.
\end{proposition}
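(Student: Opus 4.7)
The plan is to construct an explicit counterexample on $\R^2$ that realizes the picture in Figure \ref{fig:non-uniqueness}. I would take the autonomous $1$-Lipschitz vector field
\[
b(x,y) := (y^+,\,0), \qquad y^+:=\max(y,0).
\]
Its key feature is that $b$ vanishes identically on the open lower half-plane $\Omega_-:=\{y<0\}$ and equals the smooth shear field $(y,0)$ on the open upper half-plane $\Omega_+:=\{y>0\}$; in particular $b$ is smooth on each of $\Omega_\pm$, which is exactly what is needed to apply the Consistency assumption (H1) there. With initial datum $\bar T := e_2\,\delta_0$, I would exhibit the two candidate solutions
\[
T^1_t := e_2\,\delta_0, \qquad T^2_t := (e_2+t\,e_1)\,\delta_0,
\]
which agree at $t=0$ and are distinct for every $t>0$ (e.g.\ $T^1_{1/2}-T^2_{1/2}=-\tfrac12 e_1\delta_0$).

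The strategy is to realise each $T^i$ as the pointwise-in-$t$ weak-$*$ limit, as $\eps\to 0$, of a family of classical smooth solutions supported in one of the half-planes, and then invoke (H1) on the approximants and (H2) on the limit. For $T^1$ I would take the constant family $T^{1,\eps}_t := e_2\,\delta_{(0,-\eps)}$, supported in $\Omega_-$ where $b\equiv 0$: the equation restricted to $\Omega_-$ reduces to $\partial_t T_t=0$, which constant families trivially satisfy in the sense of \eqref{eq:def_weak_sol_smooth_field}, so (H1) yields $(T^{1,\eps}_t)_t\in\Scal_b$. For $T^2$ I would take $T^{2,\eps}_t := (e_2+t\,e_1)\,\delta_{(t\eps,\eps)}$, which is the pushforward of $e_2\,\delta_{(0,\eps)}$ by the smooth flow $\Phi_t(x,y)=(x+ty,y)$ of $b|_{\Omega_+}$ (one checks that $\Phi_t(0,\eps)=(t\eps,\eps)$ and $D\Phi_t\cdot e_2=e_2+t\,e_1$); since it is supported in $\Omega_+$, it is a classical weak solution there and (H1) gives $(T^{2,\eps}_t)_t\in\Scal_b$. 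A direct computation shows that $\int_0^1 \Mbf(T^{i,\eps}_t)\,\dd t$ is equibounded in $\eps$ (the masses are $1$ and $\sqrt{1+t^2}$ respectively), and clearly $T^{i,\eps}_t\to T^i_t$ weakly-$*$ for every fixed $t$. Therefore (H2) places both $(T^1_t)_t$ and $(T^2_t)_t$ in $\Scal_b$, concluding the proof.

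I do not anticipate a substantive obstacle: once the vector field is designed to be asymmetric across $\{y=0\}$ (trivial below, shearing above), both approximating families are forced by the explicit form of the flow and verifying that each is a classical weak solution in the appropriate half-plane is routine. The only technical point requiring care is that the definition \eqref{eq:def_weak_sol_smooth_field} of weak solution \emph{supported in an open set} $\Omega$ makes sense when testing against $\Lcal_b\omega$ for $\omega\in\Dscr^k(\R^d)$; but this is standard, since $\Lcal_b\omega$ is smooth on $\Omega$ and the pairing only depends on its values on $\spt T^{i,\eps}\subset\Omega$.
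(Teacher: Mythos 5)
Your proof is correct and follows essentially the same construction as the paper: same vector field $b(x,y)=(y^+,0)$, same two candidate solutions, same approximating families $T^{1,\eps}_t=e_2\delta_{(0,-\eps)}$ and $T^{2,\eps}_t=(e_2+te_1)\delta_{(t\eps,\eps)}$, and the same use of (H1) on the approximants followed by (H2) to pass to the limit as $\eps\to 0$.

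The one place where you diverge is in justifying that $(T^{2,\eps}_t)_t\in\Scal_b$. You argue directly that $T^{2,\eps}_t$ is the pushforward of $e_2\delta_{(0,\eps)}$ by the smooth flow $\Phi_t$ of $b|_{\Omega_+}$, hence a weak solution in the sense of \eqref{eq:def_weak_sol_smooth_field} supported in $\Omega_+$, and invoke (H1). The paper instead first replaces the Dirac initial datum by a renormalized unit-mass segment current $\{0\}\times[\eps,\eps+\eta]$ — a \emph{normal} current — pushes that forward, and then passes to the limit $\eta\to 0$ before the final limit $\eps\to 0$, using (H2) twice. Your shortcut is legitimate: the pointwise identity $\frac{\dd}{\dd t}\Phi_t^*\omega=\Phi_t^*(\Lcal_{b_t}\omega)$ immediately gives $\frac{\dd}{\dd t}\langle T^{2,\eps}_t,\omega\rangle=\langle T^{2,\eps}_t,\Lcal_{b_t}\omega\rangle$, and integrating against $\psi$ yields \eqref{eq:def_weak_sol_smooth_field} without any normality requirement, since (H1) as stated only asks for finite mass. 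So your argument removes one layer of approximation at no cost. The remark you add at the end — that the pairing $\langle T,\Lcal_b\omega\rangle$ is well defined because the current is supported where $b$ is smooth — is the right thing to flag and closes the only loose end.
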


\begin{proof}
    We define the vector field
    \[
    b(x,y):=
    \begin{cases}
    (y,0) & \text{if $y\ge 0$}\\
    0 & \text{otherwise}
    \end{cases}.
    \]
    We claim that if $\Scal_b$ is any natural family of solutions then the following curves of finite mass $1$-currents both belong to $\Scal_b$: 
    \[
    T_t^1:=e_2 \delta_0\qquad T_t^2:=(e_2+te_1)\delta_0.
    \]
    Indeed the following families are solutions because of the consistency assumption (H1):
    \[
    T_t^{1,\eps} :=e_2\delta_{(0,-\eps)},\qquad T_t^{2,\eps}:= (e_2+te_1)\delta_{(t\eps,\eps)}.
    \]
    The first is trivially a solution starting from $e_2\delta_{(0,-\eps)}$, since $b=0$ on the support of $T_t^{1,\eps}$. The second is also a solution starting from $e_2\delta_{(0,\eps)}$: to see this one can consider the $1$-current associated with the segment $\{0\}\times [\eps,\eps+\eta]$ for some $\eta>0$, with orientation $e_2$, and renormalized to have mass 1. Then this is a normal current and the (unique) solution starting from it at time $t=0$ is given by the pushforward with respect to $\Phi_t$. Passing to the limit as $\eta\to 0$ one gets the claim.
    Moreover it is not hard to see that $T_t^{1,\eps}$ and $T_t^{2,\eps}$ converge to $T_t^1$ and $T_t^2$ respectively, thus the latter belong to $\Scal_b$ by the stability assumption (H2). To finish the proof we just observe that at time $t=0$ both solutions coincide with $e_2\delta_0$.
\end{proof}

\bibliographystyle{plain}

\end{document}